\documentclass[a4paper,10pt]{article}
\usepackage{amsmath,amssymb}
\usepackage{color}
\usepackage{tikz}

\newtheorem{theorem}{Theorem}[section] 
\newtheorem{proposition}[theorem]{Proposition} 
 
\newtheorem{corollary}[theorem]{Corollary} 
\newtheorem{lemma}[theorem]{Lemma}

\newtheorem{remark}[theorem]{Remark}

\newcommand{\TA}{\mathbb{A}}
\newcommand{\TD}{\mathbb{D}}
\newcommand{\TE}{\mathbb{E}}

\newcommand{\Fq}{\mathbb{F}_q}
\newcommand{\kk}{\mathbb{K}}
\newcommand{\ZZ}{\mathbb{Z}}
\newcommand{\QQ}{\mathbb{Q}}

\newcommand{\balpha}{\boldsymbol{\alpha}}
\newcommand{\bbeta}{\boldsymbol{\beta}}

\newcommand{\spec}{\operatorname{Spec}}

\newcommand{\lra}{\rightarrow}

\newenvironment{proof}{\begin{trivlist}\item{\bf{Proof.}}}
  {\hfill\rule{2mm}{2mm}\end{trivlist}}

\title{On the number of points over finite fields \\on varieties
  related to cluster algebras}
\author{F. Chapoton} \date{\today}

\begin{document}

\maketitle

\begin{abstract}
  We compute the number of points over finite fields of some algebraic varieties
  related to cluster algebras of finite type. More precisely, these
  varieties are the fibers of the projection map from the cluster
  variety to the affine space of coefficients.
\end{abstract}

\section{Introduction}

Cluster algebras have been introduced by S. Fomin and A. Zelevinsky
around 2000 \cite{cluster1,cluster2,cluster3,cluster4}, and have since
been a very active subject. Many connections have been found to
combinatorics \cite{fst}, Poisson geometry \cite{fg} and
representation theory \cite{bmrrt}. Cluster algebras are commutative
algebras, and can therefore be considered as objects of algebraic
geometry. In this article, we consider some algebraic varieties
closely related to the spectrum of cluster algebras. As a first step
towards computing their cohomology, we count their points over finite
fields.

More precisely, we use as a starting point a theorem \cite[Corollary
1.17]{cluster3} which gives a presentation by generators and relation
of acyclic cluster algebras. We use this presentation to define, for
each tree, a family of algebraic varieties depending on parameters in
an affine space.

In the classical cluster algebra setting, this affine space of
parameters corresponds to the so-called coefficients variables in
cluster algebras. The fiber over a point in our affine space of
parameters is a fiber of the map from the spectrum of the cluster
algebra to the spectrum of its coefficient ring.

For simplicity, we restrict ourself to simply-laced cluster algebras
of finite type, which are indexed by the usual $\TA$-$\TD$-$\TE$ list
of Dynkin diagrams. We have implicitly chosen to work with the
alternating orientation of these Dynkin diagrams, but the results do
not depend on the orientation.

We are also able to obtain more precise results in the case of type
$\TA$. Here, we describe the cohomology with compact support, in the
even case.

The interest of the results may be in the very simple shape of the
answers in the generic case (formulas \eqref{genAeven},
\eqref{genAodd} in type $\TA$ and formulas \eqref{genDodd},
\eqref{genDeven} in type $\TD$), which are nice polynomials in the
cardinal $q$ of the finite field $\Fq$. It would be very interesting
to see if the cohomology with compact support is as simple as one may
expect from these nice polynomials. We prove that this is indeed the
case in type $\TA_n$ with $n$ even.

\section{General results}

In this section, we introduce general definitions and tools valid for all trees. Later on, we will use these for finite-type simply-laced Dynkin diagrams.

\subsection{Definition}

Let $T$ be a tree, \textit{i.e.} a finite graph which is connected and
simply connected. We will write $s-t$ to denote that $s$ and $t$ are
adjacent vertices of $T$.

Let $\balpha=(\alpha_t)_{t\in T}$ be a function on the set of vertices
of $T$ with values in some field $\kk$.

Let us call $X_T(\balpha)$ the affine scheme over $\kk$
defined by
\begin{equation}
 \label{exchange}
  x_t x'_t= 1 +\alpha_t \prod_{s-t} x_s,
\end{equation}
for all vertices $t$ of $T$.

One can also consider this set of equations as defining a family of
schemes over the base affine scheme $\spec \ZZ[(\alpha_t)_{t\in
  T}]$. We will study the fibers of this family. From now on, we will
assume (unless explicitly stated otherwise) that the $\alpha_t$ are
invertible. This amounts to restrict the family to $\spec
\ZZ[(\alpha_t,\alpha_t^{-1})_{t\in T}]$.

\begin{remark}
  Instead of a tree $T$, one can also consider a disjoint union of
  trees $F$, in which case the variety $X_F(\balpha)$ will be the
  product of the varieties associated with the connected components of
  $F$.
\end{remark}

\subsection{Reduction using domino tiling}

Every tree $T$ is a bipartite graph. Let us fix a choice of black
and white vertices such that every edge has a white end and a black
end.

\begin{lemma}
  \label{saute_mouton}
  Let $s$ and $t$ be adjacent vertices. Let $\bbeta$
  be the function defined by
  \begin{align*}
    \beta_s&=1,\\
    \beta_u&=\alpha_u/\alpha_s&\text{ if }u\not= s\text{ and }u-t,\\
    \beta_u&=\alpha_u&\text{else.}
  \end{align*}
  Then $X_T(\balpha)$ is isomorphic to $X_T(\bbeta)$.
\end{lemma}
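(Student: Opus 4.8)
The plan is to produce the isomorphism explicitly, as a diagonal rescaling of coordinates on the ambient affine space. Write $X_T(\balpha)$ inside the affine space with coordinates $(x_u,x'_u)_{u\in T}$, cut out by the relations \eqref{exchange}, one for each vertex $u$. The point to exploit is that each variable occurs in these relations in a very restricted way: $x'_u$ appears only on the left-hand side of the relation at $u$, while $x_u$ appears on the left-hand side of the relation at $u$ and, since $T$ is a tree (no loops), inside the monomial $\prod_{r-v}x_r$ on the right-hand side of the relation at each neighbour $v$ of $u$, and nowhere else. So I would try the substitution $x_t\mapsto \alpha_s x_t$, $x'_t\mapsto \alpha_s^{-1}x'_t$, leaving all other coordinates fixed; this is an isomorphism of affine schemes because $\alpha_s$ is invertible under our standing assumption, since it is an invertible diagonal linear automorphism of the ambient affine space.

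Next I would track the effect of this substitution on each defining relation. At the vertex $t$ itself: the left-hand side $x_tx'_t$ is unchanged (the two scalars cancel) and the right-hand side does not involve $x_t$, so the relation keeps its coefficient $\alpha_t$, in agreement with $\beta_t=\alpha_t$. At a vertex $u$ adjacent to $t$: the left-hand side is untouched, while the monomial on the right-hand side now contains the rescaled variable $x_t$ and thus picks up a factor $\alpha_s$, so to recover a relation of the original shape the coefficient must be divided by $\alpha_s$, giving the new parameter $\alpha_u/\alpha_s$; since $s$ is one such neighbour this also produces $\beta_s=\alpha_s/\alpha_s=1$, consistent with the definition of $\bbeta$. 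At every other vertex $u$ (neither $t$ nor adjacent to $t$): no rescaled variable occurs in its relation, so the coefficient $\alpha_u$ is unchanged, again matching $\bbeta$. Hence the substitution carries each generator of the ideal of $X_T(\bbeta)$ to the corresponding generator of the ideal of $X_T(\balpha)$, so it identifies the two ideals and induces the desired isomorphism $X_T(\balpha)\cong X_T(\bbeta)$.

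There is no deep obstacle here; the verification is essentially bookkeeping. The points that deserve care are getting the direction of the rescaling right (one must multiply $x_t$ by $\alpha_s$, compensating on $x'_t$, rather than the reverse) and noticing that $s$ is itself a neighbour of $t$, so that the clause ``$u\ne s$ and $u-t$'' together with $\beta_s=1$ really does describe a single uniform rule: divide $\alpha_u$ by $\alpha_s$ for every neighbour $u$ of $t$. As a sanity check I would run through the case of the path $s-t-w$ to confirm that no relation is mishandled. (This elementary move across the edge $s-t$ is the ``leapfrog'' step underlying the domino-tiling reduction of the section title.)
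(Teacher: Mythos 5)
Your proof is correct and is essentially the paper's own argument: the paper likewise uses the change of variables $x_t\mapsto \alpha_s x_t$, $x'_t\mapsto x'_t/\alpha_s$, and your write-up simply spells out the bookkeeping (the relation at $t$ is untouched, each neighbour $u$ of $t$ has its coefficient divided by $\alpha_s$, in particular $\beta_s=1$, and all other relations are unchanged). Nothing further is needed.
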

\begin{proof}
  One uses the change of variable $x_t=\alpha_s x_t$ and
  $x'_t=x'_t/\alpha_s$.
\end{proof}

Note that this operation only changes the values of the function on
vertices that have the same color as $s$. In a pictorial way, the
value at vertex $s$ jumps over vertex $t$ and get spread over the
other neighbors of $t$.

%One could call this operation leapfrog.

A \textbf{partial domino tiling} of $T$ is a subset of the set of
edges such that every vertex appears at most once among the ends of
the chosen edges. It is called \textbf{full} if every vertex appears
exactly once.

\begin{lemma}
  \label{white_leaf}
  Let $T$ be a tree with a full domino tiling. Then $T$ has a white leaf.
\end{lemma}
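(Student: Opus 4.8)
The plan is to argue by induction on the number of vertices of $T$. If $T$ has a single vertex there is no full domino tiling, so the statement is vacuous; if $T$ has exactly two vertices, they span a single edge, which by hypothesis has one white end and one black end, both of which are leaves, so a white leaf exists.

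For the inductive step I pick any leaf $\ell$ of $T$ (one exists, since a finite tree with more than one vertex has a leaf) and let $t$ be its unique neighbor. If $\ell$ is white we are done, so assume $\ell$ is black; then $t$ is white. Since $t$ is the only neighbor of $\ell$, the full domino tiling must use the edge between $\ell$ and $t$. If $t$ has no neighbor other than $\ell$, then $T$ is the two-vertex tree and $t$ itself is the required white leaf. Otherwise let $u_1,\dots,u_k$, with $k\ge 1$, be the other neighbors of $t$. Deleting the vertex $t$ breaks $T$ into the component $\{\ell\}$ together with $k$ subtrees $C_1,\dots,C_k$, where $u_i$ lies in $C_i$, so that $T\setminus\{\ell,t\}$ is the disjoint union of the $C_i$. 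Because $t$ is already covered by the edge to $\ell$, none of the edges from $t$ to the $u_i$ belong to the tiling; hence the remaining tiling edges all lie inside the $C_i$ and restrict to a full domino tiling of each $C_i$. In particular each $C_i$ is nonempty, and since $u_i$ must be covered by an edge inside $C_i$, each $C_i$ has at least two vertices and strictly fewer than $T$.

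Applying the induction hypothesis to $C_i$, equipped with the coloring inherited from $T$, yields a white leaf $w_i$ of $C_i$, and it remains to check that $w_i$ is actually a white leaf of $T$. Whiteness is inherited. For the leaf property, the only vertex of $C_i$ whose degree in $T$ exceeds its degree in $C_i$ is $u_i$, the vertex that was attached to the deleted vertex $t$; but $u_i$ is black, being adjacent to the white vertex $t$, so $w_i\ne u_i$ and therefore $w_i$ has degree one in $T$ as well. This produces a white leaf of $T$ and closes the induction.

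The argument is elementary, and I do not expect a genuine obstacle. The one point needing care is the color-and-degree bookkeeping in the last step: the key observation is that the white leaf supplied by the inductive hypothesis for $C_i$ cannot coincide with the attachment vertex $u_i$, precisely because $u_i$ is black, and hence it survives as a leaf once the pieces $C_i$ are reattached through $t$.
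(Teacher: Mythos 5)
Your proof is correct, and it follows the same route the paper indicates: the paper dismisses the lemma with ``an easy induction on the number of vertices,'' and your argument is precisely that induction, carried out in full (remove a black leaf together with its matched white neighbor, restrict the tiling to the resulting components, and note the attachment vertices are black so the white leaf produced by induction survives as a leaf of $T$). The color-and-degree bookkeeping you highlight at the end is exactly the point that makes the ``easy induction'' go through.
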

\begin{proof}
  By an easy induction on the number of vertices.
\end{proof}

\begin{proposition}
  \label{reduction}
  Let $T$ be a tree, endowed with a partial domino tiling. Every
  $X_T(\balpha)$ is isomorphic to some $X_T(\bbeta)$ where
  $\beta_s=1$ for every vertex $s$ which is covered by a domino.
\end{proposition}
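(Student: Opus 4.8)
The plan is to induct on the number of vertices, proving the slightly more precise statement that the isomorphism can be taken to be a composition of the elementary isomorphisms of Lemma~\ref{saute_mouton}. Since deleting two adjacent vertices from a tree leaves a forest, it is convenient to prove this for forests; by the Remark above it reduces to the case of each connected component, and each component of a tree with two vertices removed has strictly fewer vertices than the original, so the induction is legitimate. The base case of a single vertex is trivial.

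The observation that makes everything work is that Lemma~\ref{saute_mouton} is essentially harmless when applied ``towards a leaf''. Let $\ell$ be a leaf of $T$ and $p$ its unique neighbour. Applying the lemma with $s=p$ and $t=\ell$ sets $\beta_p=1$ and leaves every other $\beta_u$ equal to $\alpha_u$, because there is no vertex $u\ne p$ with $u-\ell$; I will call the resulting isomorphism the \emph{leaf move at $\ell$} --- it sets the value at $p$ to $1$ and changes nothing else. Dually, applying the lemma with $s=\ell$ and $t=p$ sets $\beta_\ell=1$ and only rescales the values at the neighbours of $p$ other than $\ell$ (all of which lie in $T-\ell-p$), while leaving $\beta_p$ untouched.

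Fix a leaf $\ell$ with neighbour $p$. If $\ell$ is not covered by a domino, the same partial tiling is a partial tiling of $T-\ell$; I apply the induction hypothesis there and then replay the analogous sequence of leap-frog moves on $X_T(\balpha)$. Because the only vertex of $T$ outside $T-\ell$ is $\ell$ itself, these moves act on the coordinates indexed by $T-\ell$ exactly as they do on $X_{T-\ell}$, so all covered values become $1$, and whatever happens to $\beta_\ell$ is irrelevant. If instead $\{\ell,p\}$ is one of the dominoes, I proceed in three stages. (1) Perform the leaf move at $\ell$ and then the move with $s=\ell$, $t=p$; now $\beta_\ell=\beta_p=1$ and the only other affected values sit on neighbours of $p$, hence in $T-\ell-p$. (2) Apply the induction hypothesis to the forest $T-\ell-p$ carrying the remaining dominoes, and replay its leap-frog moves on the current variety: confined to $T-\ell-p$, they transform the coordinates indexed there just as on $X_{T-\ell-p}$, making the remaining covered values equal to $1$; they never touch $\beta_\ell$ (no such move has its ``$t$'' equal to $p$), but a move whose ``$t$'' is a neighbour of $p$ may perturb $\beta_p$. (3) Perform the leaf move at $\ell$ once more: it resets $\beta_p=1$ and, decisively, alters nothing else. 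Composing all the isomorphisms used proves the statement.

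The difficulty, and the reason a naive domino-by-domino sweep fails, is precisely this interference: the leap-frog move that forces one value to $1$ simultaneously rescales an entire neighbourhood, so two dominoes joined by an edge compete over a shared vertex --- already on the path with four vertices and its two extreme dominoes, no ordering of the four required moves succeeds if one insists on finishing one domino before starting the other. The resolution above hinges on the leaf move being completely side-effect-free, so that it can be used once to prepare $\beta_p$ before the recursion and once more to repair it afterwards, while the recursive moves are structurally barred from ever disturbing $\beta_\ell$. The step that must be checked with the most care is the claim in stage (2) that a leap-frog move confined to a sub-forest acts on that sub-forest's coordinates exactly as it would on the smaller variety, with $\beta_p$ as the only possible extra casualty; this is a routine but genuine bookkeeping of which neighbourhood each move reaches.
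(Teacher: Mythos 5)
Your proof is correct, and it takes a genuinely different route from the paper's. The paper argues globally: it separates the two colours of the bipartition (a move with source $s$ only alters values at vertices of the same colour as $s$), orients each edge from white to black inside a domino and from black to white outside, and then performs, for each covered vertex $s$, the single move flipping $s$ over its partner $B(s)$, in an order compatible with this orientation; Lemma~\ref{white_leaf} guarantees one can always start with a white leaf of the matched subforest, and the chosen order is exactly what ensures that a value already set to $1$ is never rescaled later. Your induction on the number of vertices dispenses with the colouring, the orientation and Lemma~\ref{white_leaf} altogether: the only structural input is that the move towards a leaf is side-effect-free, which lets you prepare and then repair the value at $p$ around the recursive step, while the recursive moves are structurally unable to reach $\beta_\ell$. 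The price is a repeated elementary move (the repair in your stage (3); the first leaf move of stage (1) is in fact redundant) and the ``replay'' bookkeeping, which you handle correctly: a move $(s,t)$ with both ends in the subforest changes the subforest's values exactly as on the smaller variety, since the amount of rescaling depends only on the current value at $s$, and the only extra casualty is $p$ (or the irrelevant uncovered leaf in your first case). The paper's scheme is more economical --- exactly one move per covered vertex, in one well-chosen global order --- and makes the non-interference of the two colours transparent; yours is more elementary and self-contained, works verbatim for forests, and makes explicit the strengthened statement that the isomorphism is a composition of the elementary isomorphisms of Lemma~\ref{saute_mouton}.
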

\begin{proof}
  The proof uses only Lemma \ref{saute_mouton} to modify the function $\balpha$. One can therefore treat black and white vertices separately. Let us just prove the statement for white vertices, the case of black
  vertices being the same with colors exchanged.

  Let $C$ be the set of vertices covered by dominoes. The partial
  domino tiling of $T$ defines for every white vertex $s\in C$ a canonical
  black neighbor $B(s)$.

  Let us orient the edges of $T$ from white to black inside the dominoes (from $s$ to $B(s)$) and from black to white outside the dominoes.

  Then the induced subgraph of $T$ on the set $C$ of vertices covered
  by dominoes is a disjoint union of trees with oriented edges. One
  has to flip every white vertex $s\in C$ over the black vertex $B(s)$
  (which means using Lemma \ref{saute_mouton} to replace $\balpha$ by
  a modified function), in a well-chosen order. This order must be
  compatible with the partial order given by the orientation of edges
  : one has to start with white leaves (which exist by Lemma
  \ref{white_leaf}).

  At the end of the process, one obtains a function $\bbeta$ such that every white vertex in $C$ has value $1$ and such that $X_T(\balpha)$ is isomorphic to $X_T(\bbeta)$.
\end{proof}

% \begin{proposition}
%   More generally, the minimal number of parameters if the co-rank of
%   the cluster matrix associated with any orientation of $T$. This is
%   also the minimal number of isolated vertices in a partial domino
%   tiling.
% \end{proposition}
% \begin{proof}
%   NO idea .. but not useful
% \end{proof}

\begin{remark}
  For every tree, one can find a partial domino tiling where the
only vertices which are not covered are leaves.
\end{remark}

\subsection{Induction by removal of leaves}

Let $T$ be a tree and let $f$ be a leaf of $T$. Let $g$ be the unique vertex
adjacent to $f$. Let $\balpha$ be a function on $T$.

Let $T'$ be the tree obtained from $T$ by removing $f$. For every
element $\beta$ in the ground field $\kk$, let $\balpha'(\beta)$ be the
function on $T'$ defined by
\begin{align}
  \alpha'_g(\beta)&=\alpha_g \beta,\\
  \alpha'_s(\beta)&=\alpha_s \text{ if }s\not=g.
\end{align}

Let $T''$ be the tree or disjoint union of trees obtained from $T$ by
removing $f$ and $g$. Let $\balpha''$ be the function on $T''$ defined
by
\begin{align}
  \alpha''_s&=-\alpha_s/\alpha_f &\text{ if }s-g \text{ in }T,\\
  \alpha''_s&=\alpha_s &\text{ else.}
\end{align}

\begin{proposition}
  \label{leaf_removal}
  The scheme $X_T(\balpha)$ is the disjoint union of the scheme $A_1
  \times X_{T''}(\balpha'') $ and of a variety fibered over $A_1
  \setminus \{0\}$ with fiber $X_{T'}(\balpha'(\beta))$ over $\beta$.
\end{proposition}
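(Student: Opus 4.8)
The plan is to stratify $X_T(\balpha)$ by the locus where the coordinate $x_f$ vanishes. The key observation is that among the defining equations \eqref{exchange}, the only one involving $x_f$ or $x'_f$ is the one at the leaf $f$, namely $x_f x'_f = 1 + \alpha_f x_g$, and the only other equation mentioning $x_f$ is the one at $g$, which reads $x_g x'_g = 1 + \alpha_g x_f \prod_{s-g,\,s\neq f} x_s$.

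First I would treat the open subscheme where $x_f\neq 0$. I would use $x_f$ itself as the map to $A_1\setminus\{0\}$, setting $\beta = x_f$. Over a fixed $\beta\neq 0$, the leaf equation determines $x'_f$ uniquely, and the remaining coordinates $(x_t,x'_t)_{t\in T'}$ are then constrained exactly by the equations of $X_{T'}(\balpha'(\beta))$: every equation at a vertex $t\notin\{f,g\}$ is literally unchanged (removing the leaf $f$ alters the neighborhood of no vertex other than $g$), while the equation at $g$ becomes $x_g x'_g = 1 + (\alpha_g\beta)\prod_{s-g\text{ in }T'}x_s$, which is the $X_{T'}$-equation at $g$ precisely because $\alpha'_g(\beta)=\alpha_g\beta$. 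So this open part is fibered over $A_1\setminus\{0\}$ with fiber $X_{T'}(\balpha'(\beta))$ over $\beta$, the fiber isomorphism being ``forget $x'_f$''.

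Then I would treat the closed complement $\{x_f=0\}$. There the leaf equation forces $1+\alpha_f x_g = 0$; since $\alpha_f$ is invertible this fixes $x_g = -\alpha_f^{-1}$, and then the product term in the $g$-equation is killed, so $x_g x'_g = 1$ and hence $x'_g = -\alpha_f$ is fixed as well. The coordinate $x'_f$ is now entirely free, contributing an $A_1$ factor. Every remaining equation becomes a defining equation of $X_{T''}(\balpha'')$: substituting the constant $x_g=-\alpha_f^{-1}$ into the equation at a neighbor $s$ of $g$ replaces its coefficient $\alpha_s$ by $-\alpha_s/\alpha_f=\alpha''_s$, while the equation at any vertex not adjacent to $g$ is unchanged. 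Hence $\{x_f=0\}\cong A_1\times X_{T''}(\balpha'')$. Since $\{x_f=0\}$ and $\{x_f\neq 0\}$ partition $X_T(\balpha)$, putting the two pieces together gives the statement.

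I do not expect a real obstacle: the argument is bookkeeping with the equations \eqref{exchange}. The points that need care are, first, the degeneration on $\{x_f=0\}$ --- it matters that $x_f=0$ forces \emph{both} $x_g$ and $x'_g$ to be constants, which is what makes the vertex $g$ truly drop out and leaves a clean product $A_1\times X_{T''}(\balpha'')$; and second, that one uses invertibility of the $\alpha_t$ throughout, so that $x_g=-\alpha_f^{-1}$ makes sense, and so that the modified functions $\balpha''$ and $\balpha'(\beta)$ (for $\beta\in A_1\setminus\{0\}$) again take invertible values, keeping everything inside the family being studied.
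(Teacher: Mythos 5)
Your proposal is correct and follows essentially the same argument as the paper: stratify by $x_f=0$ versus $x_f\neq 0$, noting that on the closed stratum the leaf equation pins down $x_g=-\alpha_f^{-1}$ (and then $x'_g$), leaving $x'_f$ free and the equations of $X_{T''}(\balpha'')$, while on the open stratum $x'_f$ is determined and the remaining equations are those of $X_{T'}(\balpha'(\beta))$ with $\beta=x_f$. The extra bookkeeping you carry out (the substitution producing $\alpha''_s=-\alpha_s/\alpha_f$ at neighbors of $g$, and the role of invertibility of the $\alpha_t$) is exactly what the paper's shorter proof leaves implicit.
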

\begin{proof}
  One simply has to separate points according to whether $x_f=0$
  (which gives the first part) or not (which gives the variety fibered over
  $A_1 \setminus \{0\}$).

  If one assumes $x_f=0$, the equation \eqref{exchange} for vertex $f$
  gives the invertible value $-1/\alpha_f$ for $x_g$. Then the equation
  \eqref{exchange} for vertex $g$ gives a value for $x'_g$. There
  remains a free variable $x'_f$ and the equations for $T''$ with
  function $\balpha''$. This corresponds to the product of the affine
  space $A_1$ and the variety $X_{T''}(\balpha'')$
 
  If one now assumes that $x_f$ is invertible, then the equation
  \eqref{exchange} for vertex $f$ gives a value to $x'_f$. One can
  remove this equation; there remains the equations for $T'$ with a
  function depending on the value of $x_f$. If one moreover fixes the
  value of $x_f$ to be an invertible element $\beta$ of $\kk$, then
  one gets the equations of $X_{T'}(\balpha'(\beta))$.
\end{proof}

\section{Type $\TA$}

We will now consider the Dynkin diagrams of type $\TA$.

\begin{center}
\begin{tikzpicture}[scale=0.6]
\tikzstyle{every node}=[draw,shape=circle,very thick,fill=white]

\draw (0,0) node[fill=blue!50] {} -- (1,0) node {} -- (2,0) node {} -- (3,0) node {} -- (4,0) node {} -- (5,0) node{};

\end{tikzpicture}
\end{center}

\subsection{Number of points over finite fields}

In type $\TA_n$, the cluster algebra is generated by $n$ cluster
variables $x_1,\dots,x_n$ (which form a cluster), the $n$ adjacent
cluster variables $x'_1,\dots,x'_n$ and $n$ coefficient variables
$\alpha_1,\dots,\alpha_n$ with the following relations:
\begin{align*}
  x_1 x'_1 & = 1 + \alpha_1 x_2,\\
  x_2 x'_2 & = 1 + \alpha_2 x_1 x_3,\\
  & \dots \\
  x_{n-1} x'_{n-1} & = 1 + \alpha_{n-1} x_{n-2} x_n,\\
  x_n x'_n & = 1 + \alpha_n x_{n-1}.
\end{align*}

Let us call $X_{\TA_n}(\alpha_1,\dots,\alpha_n)$ this variety. As
before, we assume that the $\alpha_i$ are invertible.

\begin{proposition}
  If $n$ is even, then $X_{\TA_n}(\alpha_1,\dots,\alpha_n)\simeq X_{\TA_n}(1,1,\dots,1)$. 

  If $n$ is odd, then $X_{\TA_n}(\alpha_1,\dots,\alpha_n)\simeq X_{\TA_n}(\alpha,1,\dots,1)$, for some $\alpha$ depending only on the $\alpha_i$ with $i$ odd.
\end{proposition}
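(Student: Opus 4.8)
The plan is to apply Proposition~\ref{reduction} to the path $\TA_n$ with a carefully chosen partial domino tiling, and then, in the odd case, to trace through its proof to see which parameters are actually altered. Throughout, fix the bipartition of $1-2-\cdots-n$ in which the odd-numbered vertices are white and the even-numbered ones are black.

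For $n$ even, the path has a full domino tiling, e.g.\ $\{1,2\},\{3,4\},\dots,\{n-1,n\}$; since every vertex is then covered, Proposition~\ref{reduction} immediately yields a $\bbeta$ with all $\beta_s=1$, i.e.\ $X_{\TA_n}(\balpha)\simeq X_{\TA_n}(1,\dots,1)$. For $n$ odd, I would instead use the partial tiling $\{2,3\},\{4,5\},\dots,\{n-1,n\}$, which covers every vertex except the endpoint $1$; Proposition~\ref{reduction} then gives $\bbeta$ with $\beta_s=1$ for all $s\neq1$, so $X_{\TA_n}(\balpha)\simeq X_{\TA_n}(\beta_1,1,\dots,1)$, and it only remains to identify $\beta_1$.

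To see that $\beta_1$ depends only on the $\alpha_i$ with $i$ odd, I would revisit the proof of Proposition~\ref{reduction}: the reduction is performed exclusively by the moves of Lemma~\ref{saute_mouton}, and it handles white and black vertices independently. A move at a white vertex $s$ sends the value at $s$ to $1$ and divides the values at the remaining neighbours of the black vertex $B(s)$ by the former value at $s$ --- and those neighbours are all white. Hence the values sitting on white vertices stay Laurent monomials in the original white parameters $\alpha_1,\alpha_3,\dots,\alpha_n$ for the whole process; in particular $\beta_1$ is one such monomial, so one may take $\alpha=\beta_1$. (A short computation even gives $\alpha=\prod_k\alpha_{2k+1}^{(-1)^k}$, but this is not needed.)

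The main thing to be careful about is precisely this last step: ensuring that no even-indexed $\alpha_i$ creeps into $\beta_1$. This is guaranteed by the bipartite bookkeeping in Lemma~\ref{saute_mouton} --- a white vertex jumps over a black neighbour and lands only on white vertices --- so the white and the black parameters evolve in completely separate channels and cannot interact. Everything else is a routine invocation of Proposition~\ref{reduction}, together with the elementary observation that a path admits a full domino tiling exactly when its number of vertices is even, and otherwise a partial tiling leaving only a prescribed endpoint uncovered.
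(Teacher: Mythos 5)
Your proof is correct and follows essentially the same route as the paper: applying Proposition~\ref{reduction} to the full domino tiling when $n$ is even and to the partial tiling avoiding only vertex $1$ when $n$ is odd. Your extra bookkeeping showing that $\beta_1$ remains a Laurent monomial in the odd-indexed parameters just makes explicit what the paper leaves implicit in its remark that a move of Lemma~\ref{saute_mouton} only alters values on vertices of the same colour as $s$.
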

\begin{proof}
  This is obtained by applying Proposition \ref{reduction} to the
  obvious full domino tiling (even case) or to the partial domino
  tiling avoiding only the first vertex (odd case).
\end{proof}

For short, we will denote $X_{n}(\alpha)$ for
$X_{\TA_n}(\alpha,1,\dots,1)$. For the number of points of these
varieties over finite fields, we will use the following notation:
$N_{\TA_n}(\alpha)$ is the number of points of $X_n(\alpha)$. When $n$
is even, we will also use $N_{\TA_n}$ for short.

\begin{proposition}
  \label{valeurA}
  If $n$ is even, then
  \begin{equation}
    \label{genAeven}
    N_{\TA_n}=\frac{q^{n+2}-1}{q^2-1}.
  \end{equation}
  If $n$ is odd and $\alpha \not = (-1)^{(n+1)/2}$, then
   \begin{equation}
     \label{genAodd}
     N_{\TA_n}(\alpha)=\frac{(q^{(n+1)/2}-1)(q^{(n+3)/2}-1)}{q^2-1}.
  \end{equation}
  If $n$ is odd, then
  \begin{equation}
        N_{\TA_n}((-1)^{(n+1)/2})=\frac{(q^{(n+1)/2}-1)(q^{(n+3)/2}-1)}{q^2-1}+q^{(n+1)/2}.
  \end{equation}
\end{proposition}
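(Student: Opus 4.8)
The plan is to treat the three formulas of Proposition~\ref{valeurA} as one simultaneous induction on $n$, driven by the leaf-removal recursion of Proposition~\ref{leaf_removal} applied at the \emph{first} vertex of the path. Take $T=\TA_n$ with leaf $f=1$ and neighbour $g=2$, starting from $X_n(\alpha)=X_{\TA_n}(\alpha,1,\dots,1)$. Because $\alpha_2=1$, the function $\balpha'(\beta)$ on $T'=\TA_{n-1}$ (the path on $2,\dots,n$) is $(\beta,1,\dots,1)$, so the fibre over $\beta$ is exactly $X_{n-1}(\beta)$; because $\alpha_3=1$, the function $\balpha''$ on $T''=\TA_{n-2}$ (the path on $3,\dots,n$) is $(-1/\alpha,1,\dots,1)$, i.e.\ $X_{n-2}(-1/\alpha)$. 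Counting $\Fq$-points in the disjoint union of Proposition~\ref{leaf_removal} then gives, for $n\ge 2$,
\begin{equation*}
  N_{\TA_n}(\alpha)=q\,N_{\TA_{n-2}}(-1/\alpha)+\sum_{\beta\in\Fq,\ \beta\ne 0} N_{\TA_{n-1}}(\beta).
\end{equation*}

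Now specialize to $n$ odd, so $n-1$ is even and $n-2$ is odd. By \eqref{genAeven} the term $N_{\TA_{n-1}}(\beta)=N_{\TA_{n-1}}=(q^{n+1}-1)/(q^2-1)$ does not depend on $\beta$, so the sum equals $(q-1)N_{\TA_{n-1}}=(q^{n+1}-1)/(q+1)$. For the first term, note that when $\alpha=(-1)^{(n+1)/2}\in\{1,-1\}$ we have $-1/\alpha=-\alpha=(-1)^{(n+3)/2}=(-1)^{(n-1)/2}$, which is exactly the exceptional value $(-1)^{((n-2)+1)/2}$ for $\TA_{n-2}$ (the exponent shifts by $2$). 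Hence the recursion restricts to the exceptional locus:
\begin{equation*}
  N_{\TA_n}\bigl((-1)^{(n+1)/2}\bigr)=q\,N_{\TA_{n-2}}\bigl((-1)^{(n-1)/2}\bigr)+\frac{q^{n+1}-1}{q+1}.
\end{equation*}
(For a generic $\alpha$ one has $-1/\alpha$ generic as well, so the same recursion together with \eqref{genAeven}--\eqref{genAodd} reproduces \eqref{genAodd} for $n$ and shows that $(-1)^{(n+1)/2}$ is the only value of the parameter that can behave differently.)

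The induction is now immediate. For the base case $n=1$ the scheme $X_1(\alpha)$ is $x_1x_1'=1+\alpha$, so $X_1(-1)$ is $x_1x_1'=0$ with $2q-1$ points, matching $\tfrac{(q-1)(q^2-1)}{q^2-1}+q=2q-1$. For the inductive step, set $k=(n+1)/2$ and substitute the hypothesis $N_{\TA_{n-2}}\bigl((-1)^{(n-1)/2}\bigr)=\tfrac{(q^{k-1}-1)(q^{k}-1)}{q^2-1}+q^{k-1}$ into the displayed recursion; after this substitution the asserted value follows from the elementary identity
\begin{equation*}
  q\cdot\frac{(q^{k-1}-1)(q^{k}-1)}{q^2-1}+\frac{q^{2k}-1}{q+1}=\frac{(q^{k}-1)(q^{k+1}-1)}{q^2-1},
\end{equation*}
which upon clearing denominators reads $(q^{k}-q)(q^{k}-1)+(q^{2k}-1)(q-1)=(q^{k}-1)(q^{k+1}-1)$. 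Equivalently, the defect $D_n:=N_{\TA_n}\bigl((-1)^{(n+1)/2}\bigr)-\tfrac{(q^{(n+1)/2}-1)(q^{(n+3)/2}-1)}{q^2-1}$ satisfies $D_n=q\,D_{n-2}$ with $D_1=q$, so $D_n=q^{(n+1)/2}$, which is the claim.

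The only genuinely delicate point is the first paragraph's bookkeeping: one must check that peeling off the \emph{first} vertex (rather than the last) keeps the family in the normalized shape $X_m(\cdot)$, and that $\alpha\mapsto-1/\alpha$ carries the exceptional value of $\TA_n$ to that of $\TA_{n-2}$. Given these, and given \eqref{genAeven} to make the fibered contribution constant in $\beta$, everything else is a routine induction ending in the polynomial identity above. (A more conceptual argument would try to realize the extra $q^{(n+1)/2}$ points as an additional irreducible component appearing in the fibre only at the exceptional parameter value, but the inductive count is by far the shortest route.)
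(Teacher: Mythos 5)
Your reduction via Proposition \ref{leaf_removal} at the first vertex, the identification of the two pieces as $X_{n-2}(-1/\alpha)$ and the fibres $X_{n-1}(\beta)$, and the observation that $\alpha\mapsto -1/\alpha$ sends the exceptional value $(-1)^{(n+1)/2}$ of $\TA_n$ to the exceptional value $(-1)^{(n-1)/2}$ of $\TA_{n-2}$ all match the paper's argument exactly. But there is a genuine gap: you announce a simultaneous induction on all three formulas and then only carry out the steps for $n$ odd. The even-case formula \eqref{genAeven} is never proved anywhere in your argument, yet it is load-bearing: you invoke it at level $n-1$ to make the fibered sum $\sum_{\beta\in\Fq^*}N_{\TA_{n-1}}(\beta)$ constant in $\beta$, so without its own inductive step the whole induction is incomplete (odd at $n$ needs even at $n-1$, which needs odd at $n-2$, and the even link in this chain is missing, as is any even base case such as $n=0$ or $n=2$).

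The missing step is exactly where the only nontrivial bookkeeping of the even case lies. For $n$ even the same recursion gives $N_{\TA_n}=q\,N_{\TA_{n-2}}+\sum_{\beta\in\Fq^*}N_{\TA_{n-1}}(\beta)$ with $n-1$ odd, and now the summand does depend on $\beta$: one value of $\beta$ (namely $(-1)^{n/2}$) is exceptional and contributes the extra $q^{n/2}$, the other $q-2$ values are generic. One must check that
\begin{equation*}
  (q-1)\,\frac{(q^{n/2}-1)(q^{(n+2)/2}-1)}{q^2-1}+q^{n/2}=\frac{q^{n+1}+1}{q+1},
\end{equation*}
after which the even recursion $N_{\TA_n}=qN_{\TA_{n-2}}+\tfrac{q^{n+1}+1}{q+1}$ closes by induction. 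This is precisely what the paper packages as the auxiliary identity \eqref{sum_aux} (restated as Lemma \ref{valeurAsomme}), which it verifies once and uses in the even step. Your odd-case analysis, including the base case $X_1(-1)$ with $2q-1$ points and the defect relation $D_n=qD_{n-2}$, is correct; add the even step above (and an even base case) and the proof is complete and essentially identical to the paper's.
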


\begin{proof}
  By induction using leaf-removal (Proposition
  \ref{leaf_removal}). The statement is clear if $n=0$, in which case
  the variety $X_{\TA_0}(\,)$ is just a point. It is also immediate if
  $n=1$.

  Let us first note that the statement implies (for $n$ even or odd) that
  \begin{equation}
    \label{sum_aux}
    \sum_{\alpha \in \Fq^*} N_{\TA_n}(\alpha)=\frac{q^{n+2}+(-1)^{n+1}}{q+1}.
  \end{equation}

  Assume that $n$ is even. Then Proposition \ref{leaf_removal} becomes
  \begin{equation}
    N_{\TA_n}= q N_{\TA_{n-2}}+\sum_{\alpha \in \Fq^*} N_{\TA_{n-1}}(\alpha),
  \end{equation}
  which can be rewritten (using \eqref{sum_aux}) as
  \begin{equation}
    N_{\TA_n}= q N_{\TA_{n-2}}+\frac{q^{n+1}+1}{q+1}.    
  \end{equation}
  This implies the expected formula for $N_{\TA_n}$.

  Assume that $n$ is odd.  Then Proposition \ref{leaf_removal} becomes
  \begin{equation}
    N_{\TA_n}(\alpha)= q N_{\TA_{n-2}}(-1/\alpha)+(q-1) N_{\TA_{n-1}},
  \end{equation}
  which can be rewritten as
  \begin{equation}
    N_{\TA_n}(\alpha)= q N_{\TA_{n-2}}(-1/\alpha)+\frac{q^{n+1}-1}{q+1}.
  \end{equation}

  Note that $\alpha = (-1)^{(n+1)/2}$ if and only if
  $-1/\alpha =(-1)^{(n-2+1)/2}$. The induction hypothesis then
  implies the expected formulas for $N_{\TA_n}(\alpha)$.
\end{proof}

Let $Y_{\TA_n}$ be the union of all varieties $X_{n}(\alpha)$ for
$\alpha$ invertible. By a natural convention, $Y_{\TA_0}$ is just $A^1
\setminus \{0\}$. Let us note as a lemma the formula \eqref{sum_aux}
that we have obtained in the proof of Prop. \ref{valeurA}.
\begin{lemma}
  \label{valeurAsomme}
  For every $n\geq 0$, one has
  \begin{equation}
    \sum_{\alpha \in \Fq^*} N_{\TA_n}(\alpha)=\frac{q^{n+2}+(-1)^{n+1}}{q+1}.
  \end{equation}
  This is the number of points on $Y_{\TA_n}$ over the finite field
  $\Fq$.
\end{lemma}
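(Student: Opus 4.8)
The plan is to treat the two assertions of the lemma separately. The numerical identity is in effect already available: it is exactly formula \eqref{sum_aux}, which was derived in the course of the proof of Proposition \ref{valeurA}. To keep things self-contained I would simply substitute the explicit formulas of Proposition \ref{valeurA} into $\sum_{\alpha\in\Fq^*}N_{\TA_n}(\alpha)$. When $n$ is even, $N_{\TA_n}(\alpha)=N_{\TA_n}$ is independent of $\alpha$ (by the isomorphism of the preceding proposition), so the sum equals $(q-1)N_{\TA_n}=(q-1)\frac{q^{n+2}-1}{q^2-1}=\frac{q^{n+2}-1}{q+1}$. When $n$ is odd, $q-2$ of the summands take the generic value $\frac{(q^{(n+1)/2}-1)(q^{(n+3)/2}-1)}{q^2-1}$, while the single summand at $\alpha=(-1)^{(n+1)/2}$ equals that value increased by $q^{(n+1)/2}$; writing $m=(n+1)/2$ and collecting terms gives $\frac{(q^m-1)(q^{m+1}-1)}{q+1}+q^{m}=\frac{(q^m-1)(q^{m+1}-1)+q^m(q+1)}{q+1}=\frac{q^{2m+1}+1}{q+1}=\frac{q^{n+2}+(-1)^{n+1}}{q+1}$. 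In both parities this matches the claimed value.

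For the second assertion I would first pin down precisely what $Y_{\TA_n}$ is, since this is the only genuinely non-formal point: it is not the set-theoretic union of the $X_n(\alpha)$ inside a fixed affine space (which could have spurious overlaps, e.g.\ at points with $x_2=0$), but rather the family over $A^1\setminus\{0\}$ obtained from the defining equations of $X_{\TA_n}(\alpha_1,\dots,\alpha_n)$ by letting $\alpha_1=\alpha$ vary over the punctured line while fixing $\alpha_2=\dots=\alpha_n=1$. Its fiber over $\alpha$ is then $X_n(\alpha)=X_{\TA_n}(\alpha,1,\dots,1)$ by the very definition of $X_n(\alpha)$, and this reading is consistent with the stated convention that $Y_{\TA_0}=A^1\setminus\{0\}$ (the fibers being single points, remembering $\alpha$). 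Since the fibers of a family over distinct $\Fq$-points of the base are disjoint, the set $Y_{\TA_n}(\Fq)$ is the disjoint union of the $X_n(\alpha)(\Fq)$ over $\alpha\in\Fq^*$, and hence $\#Y_{\TA_n}(\Fq)=\sum_{\alpha\in\Fq^*}N_{\TA_n}(\alpha)$, which by the first part equals $\frac{q^{n+2}+(-1)^{n+1}}{q+1}$. The case $n=0$ can be checked by hand: $Y_{\TA_0}=A^1\setminus\{0\}$ has $q-1=\frac{q^2-1}{q+1}$ points.

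I do not expect a real obstacle here. The arithmetic identity is inherited directly from the proof of Proposition \ref{valeurA}, and the main thing to be careful about is the bookkeeping of the exceptional value $\alpha=(-1)^{(n+1)/2}$ in the odd case — but that exceptional term $q^{(n+1)/2}$ is exactly what is needed so that the last numerator comes out to $q^{2m+1}+1$ rather than $q^{2m+1}-1$. The only other point worth stating explicitly is the interpretation of $Y_{\TA_n}$ as the punctured-line family above, which makes additivity of point counts over its fibers immediate.
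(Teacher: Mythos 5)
Your proof is correct and follows essentially the same route as the paper: the identity is exactly formula \eqref{sum_aux}, which the paper obtains by observing that the explicit values in Proposition \ref{valeurA} sum to the stated expression, and your computation simply carries out that summation explicitly (with the correct bookkeeping of the exceptional value $\alpha=(-1)^{(n+1)/2}$). Your clarification that $Y_{\TA_n}$ must be read as the family over $A^1\setminus\{0\}$ (with $\alpha$ as a coordinate, so the fibers are genuinely disjoint) is a sound reading of the paper's intent, consistent with $Z_{\TA_n}\simeq A^{n+1}$, and fills in a point the paper leaves implicit.
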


Let now $Z_{\TA_n}$ be the union of all varieties
$X_{n}(\alpha)$ for any $\alpha$ (we do not assume here
that $\alpha$ is invertible).
\begin{proposition}
  \label{Yopen_Yclosed}
  For every $n\geq 1$, the space $Z_{\TA_n}$ is the disjoint union of $Y_{\TA_n}$
  and $Y_{\TA_{n-1}}$. The number of points on $Z_{\TA_n}$ over the finite field
  $\Fq$ is $q^{n+1}$.
\end{proposition}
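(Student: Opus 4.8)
The plan is to realize $Z_{\TA_n}$ as the total space of a family over the affine line $A^1$ with coordinate $\alpha$, whose fiber over a value $\alpha$ is $X_n(\alpha)=X_{\TA_n}(\alpha,1,\dots,1)$, and then to split the base as $A^1=(A^1\setminus\{0\})\sqcup\{0\}$. The part lying over $A^1\setminus\{0\}$ is, by the very definition of $Y_{\TA_n}$, equal to $Y_{\TA_n}$; and the coordinate $\alpha$ separates it inside $Z_{\TA_n}$ from the fiber over $\alpha=0$. So the geometric statement reduces to the single claim that the special fiber $X_n(0)=X_{\TA_n}(0,1,\dots,1)$ is isomorphic to $Y_{\TA_{n-1}}$.

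To prove that claim I would analyse $X_n(0)$ directly. The exchange relation \eqref{exchange} at the first vertex becomes $x_1x'_1=1$, which forces $x_1$ to be invertible and fixes $x'_1=x_1^{-1}$; this variable can thus be eliminated without changing the scheme. The remaining equations are exactly the $\TA_n$-relations at the vertices $2,\dots,n$, that is, the defining relations of $X_{\TA_{n-1}}$ for the path on $\{2,\dots,n\}$, relabelled $2\mapsto 1,\dots,n\mapsto n-1$, in which the (now invertible) variable $x_1$ plays the role of the coefficient attached to the leaf vertex $2$ and all the other coefficients are equal to $1$. Letting $x_1$ range over the invertible scalars, this is precisely the family $X_{\TA_{n-1}}(\alpha,1,\dots,1)=X_{n-1}(\alpha)$, i.e. $Y_{\TA_{n-1}}$. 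For $n=1$ the whole of $X_1(0)$ is just $x_1x'_1=1$, which is $A^1\setminus\{0\}=Y_{\TA_0}$ by the stated convention, so the base case is consistent. This establishes $Z_{\TA_n}=Y_{\TA_n}\sqcup Y_{\TA_{n-1}}$.

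For the point count I would add the two contributions, both given by Lemma \ref{valeurAsomme}:
\[
\#Z_{\TA_n}(\Fq)=\#Y_{\TA_n}(\Fq)+\#Y_{\TA_{n-1}}(\Fq)
=\frac{q^{n+2}+(-1)^{n+1}}{q+1}+\frac{q^{n+1}+(-1)^{n}}{q+1}
=\frac{q^{n+1}(q+1)}{q+1}=q^{n+1},
\]
using $(-1)^{n+1}+(-1)^{n}=0$. The only delicate step is the second paragraph: one must check carefully that, after setting $\alpha=0$ and eliminating $x_1,x'_1$, the surviving system really coincides with the $\TA_{n-1}$ presentation under the relabelling — in particular that $x_2x'_2=1+x_1x_3$ is genuinely the leaf relation of $\TA_{n-1}$ with coefficient $x_1$, and that $x_1$ varies freely over the invertible scalars (with $x'_1$ a dependent variable), so that one obtains $Y_{\TA_{n-1}}$ itself and not some twisted family. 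Everything else is bookkeeping.
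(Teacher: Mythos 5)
Your proof is correct and follows essentially the same route as the paper: split $Z_{\TA_n}$ according to whether $\alpha=0$ or $\alpha$ is invertible, identify the locus $\alpha=0$ with $Y_{\TA_{n-1}}$ (via $x_1x'_1=1$ making $x_1$ an invertible parameter playing the role of the new leaf coefficient), and add the two counts from Lemma \ref{valeurAsomme}. The only difference is that you spell out in detail the identification that the paper dismisses as obvious, which is harmless.
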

\begin{proof}
  The proof of this decomposition is obvious: either $\alpha=0$ (and
  one obtains $Y_{\TA_{n-1}}$) or not (in which case one gets
  $Y_{\TA_n}$). The counting result follows from Lemma \ref{valeurAsomme}.
\end{proof}

This result immediately suggests that $Z_{\TA_n}$ may just be an affine
space. This is proved in the next section and will allow to compute
some cohomology groups.

We will use later the following result.
\begin{lemma}
  \label{Y_product}
  For every even $n\geq 1$, the space $Y_{\TA_n}$ is isomorphic to the
  product of $X_{n}(1)$ with $A^1 \setminus \{0\}$.
\end{lemma}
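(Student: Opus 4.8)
The plan is to write down the trivialisation explicitly, as a monomial rescaling of the variables of the kind already used in Lemma~\ref{saute_mouton}, but carried out with the coefficient $\alpha$ treated as an invertible scalar on the base $A^1\setminus\{0\}$. By the definition of $Y_{\TA_n}$ as the (disjoint) union of the fibers $X_n(\alpha)=X_{\TA_n}(\alpha,1,\dots,1)$, the space $Y_{\TA_n}$ is the subscheme of $A^{2n}\times(A^1\setminus\{0\})$, with coordinates $x_1,\dots,x_n$, $x'_1,\dots,x'_n$ and $\alpha$, cut out by $x_1x'_1=1+\alpha x_2$, by $x_ix'_i=1+x_{i-1}x_{i+1}$ for $1<i<n$, and by $x_nx'_n=1+x_{n-1}$; while $X_n(1)\times(A^1\setminus\{0\})$ is the subscheme of the same ambient space defined by the same equations with $\alpha$ replaced by $1$. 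Everything thus comes down to absorbing the factor $\alpha$ in the first equation by a Laurent-monomial change of the variables $x_i$.

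The key step is to find integers $a_1,\dots,a_n$ such that the substitution $x_i\mapsto\alpha^{a_i}x_i$, $x'_i\mapsto\alpha^{-a_i}x'_i$ (fixing $\alpha$) sends each defining equation of $Y_{\TA_n}$ to the corresponding equation of $X_n(1)$. Such a substitution leaves every product $x_ix'_i$ unchanged and multiplies the non-constant monomial on the right-hand side of the equation at vertex $i$ by $\alpha^{\varepsilon_i+\sum_{j-i}a_j}$, where $\varepsilon_1=1$ and $\varepsilon_i=0$ for $i>1$; so the requirement is the linear system $a_2=-1$ (from vertex $1$), $a_{i-1}+a_{i+1}=0$ for $1<i<n$, and $a_{n-1}=0$ (from vertex $n$). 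Since $n$ is even this system is consistent, being solved by $a_i=0$ for $i$ odd and $a_{2k}=(-1)^k$. These exponents are integers, hence the substitution is an automorphism of $A^{2n}\times(A^1\setminus\{0\})$ (its inverse being the rescaling by the opposite exponents), and by construction it carries the subscheme $Y_{\TA_n}$ onto the subscheme $X_n(1)\times(A^1\setminus\{0\})$; this is the desired isomorphism. One may also read this rescaling off Proposition~\ref{reduction} applied to the full domino tiling $\{1,2\},\{3,4\},\dots$ of $\TA_n$, performed over $\spec\ZZ[\alpha,\alpha^{-1}]$: each flip of Lemma~\ref{saute_mouton} multiplies a variable by a Laurent monomial in the current coefficients, all intermediate coefficients remain monomial in $\alpha$, and the reduction ends with every coefficient equal to $1$.

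There is no real obstacle beyond checking the solvability of this exponent system, and that is precisely where the evenness of $n$ is used: for odd $n$ the equations at vertices $1,3,\dots,n-2$ would propagate $a_2=-1$ along the even indices up to $a_{n-1}=\pm1$, contradicting the constraint $a_{n-1}=0$ imposed by the leaf $n$. This failure is expected, since by Proposition~\ref{valeurA} the fibers $X_n(\alpha)$ are not all isomorphic when $n$ is odd, so $Y_{\TA_n}$ is then not a product of this shape.
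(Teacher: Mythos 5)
Your proposal is correct and follows essentially the same route as the paper: the paper also trivialises the fibration $Y_{\TA_n}\to A^1\setminus\{0\}$ by the monomial change of variables of Lemma~\ref{saute_mouton} (equivalently, Proposition~\ref{reduction} applied to the full domino tiling over $\spec\ZZ[\alpha,\alpha^{-1}]$), and you have merely made the rescaling exponents explicit and checked that the evenness of $n$ makes the exponent system solvable.
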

\begin{proof}
  This is a simple consequence of Lemma \ref{saute_mouton}. One has
  clearly a fibration, and this is made trivial by a simple change of
  variables.
\end{proof}

\subsection{Cohomology with compact supports}

\begin{proposition}
  \label{z_is_a}
  For every $n\geq 1$, there is a surjective morphism $\phi$ from
  $Z_{\TA_{n+1}}$ to $Z_{\TA_n}$ with fiber $A^1$. For every $n\geq
  1$, there is an isomorphism $Z_{\TA_n} \simeq A^{n+1}$.
\end{proposition}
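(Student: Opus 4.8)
The plan is to prove $Z_{\TA_n}\simeq A^{n+1}$ by induction on $n$, using the map $\phi$ to realise $Z_{\TA_{n+1}}$ as a product $Z_{\TA_n}\times A^1$ over $Z_{\TA_n}$.

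\textit{Base case and construction of $\phi$.} For $n=1$ the scheme $Z_{\TA_1}\subset A^3$ is cut out by $x_1x'_1=1+\alpha$, so solving for $\alpha$ gives $Z_{\TA_1}\simeq A^2$ with $\mathcal O(Z_{\TA_1})=\kk[x_1,x'_1]$. For general $n\ge 1$, recall that $Z_{\TA_{n+1}}\subset A^{2n+3}$ has coordinates $\alpha,x_1,\dots,x_{n+1},x'_1,\dots,x'_{n+1}$ and equations $E_1\colon x_1x'_1=1+\alpha x_2$, then $E_k\colon x_kx'_k=1+x_{k-1}x_{k+1}$ for $2\le k\le n$, and $E_{n+1}\colon x_{n+1}x'_{n+1}=1+x_n$. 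I would define $\phi$ by forgetting the leaf at vertex $1$: a point goes to the tuple whose coefficient is $x_1$, whose cluster variables are $x_2,\dots,x_{n+1}$ and whose adjacent variables are $x'_2,\dots,x'_{n+1}$, i.e. relabel vertex $j$ of $\TA_{n+1}$ as vertex $j-1$ of $\TA_n$ and let $x_1$ play the role of the coefficient of $Z_{\TA_n}$. Under this relabelling the equations $E_2,\dots,E_{n+1}$ become \emph{exactly} the defining equations of $Z_{\TA_n}$ (including the degenerate shape of $E_2$ when $n=1$), so $\phi$ is a well-defined morphism.

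\textit{Presentation over $R:=\mathcal O(Z_{\TA_n})$ and the unit-ideal fact.} Reorganising which relations are imposed first, $\mathcal O(Z_{\TA_{n+1}})$ is obtained from $R$ by adjoining the two variables $\alpha,x'_1$ and imposing the single remaining relation $E_1\colon x_1x'_1-x_2\alpha=1$, where $x_1,x_2\in R$ via $\phi^{*}$; that is, $\mathcal O(Z_{\TA_{n+1}})\cong R[\alpha,x'_1]/(x_1x'_1-x_2\alpha-1)$, literally the same quotient of the same polynomial ring. Thus $\phi$ exhibits $Z_{\TA_{n+1}}$ as the family of affine lines $\{x_1x'_1-x_2\alpha=1\}$ over $Z_{\TA_n}$. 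The key observation is that $x_1$ and $x_2$ generate the unit ideal of $R$: the equation $E_2$ of $\TA_{n+1}$ is one of the defining relations of $Z_{\TA_n}$ and reads $x_2x'_2=1+x_1x_3$ (or $x_2x'_2=1+x_1$ when $n=1$), giving a B\'ezout identity $x_1r+x_2s=1$ in $R$ with $r=-x_3$ (resp. $r=-1$) and $s=x'_2$. Geometrically $x_1,x_2$ never vanish simultaneously on $Z_{\TA_n}$, so each fibre of $\phi$ is a nonempty affine line; hence $\phi$ is surjective with fibre $A^1$.

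\textit{Trivialising the fibration and concluding.} With $r,s\in R$ as above, a particular solution of $E_1$ is $(x'_1,\alpha)=(r,-s)$ and each fibre line has direction $(x_2,x_1)$, so $t:=sx'_1+r\alpha$ should be a global fibre coordinate. Concretely, I would check that the $R$-algebra maps $R[t]\to\mathcal O(Z_{\TA_{n+1}})$, $t\mapsto sx'_1+r\alpha$, and $\mathcal O(Z_{\TA_{n+1}})=R[\alpha,x'_1]/(x_1x'_1-x_2\alpha-1)\to R[t]$, $x'_1\mapsto r+x_2t$, $\alpha\mapsto -s+x_1t$, are mutually inverse; both composites collapse to the identity using only $x_1r+x_2s=1$ and the relation $E_1$. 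Hence $Z_{\TA_{n+1}}\simeq Z_{\TA_n}\times A^1$ over $Z_{\TA_n}$, and the inductive hypothesis $Z_{\TA_n}\simeq A^{n+1}$ yields $Z_{\TA_{n+1}}\simeq A^{n+2}$, which closes the induction and simultaneously establishes the stated properties of $\phi$.

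\textit{Main obstacle.} Everything except the trivialisation in the last paragraph is bookkeeping; the real content is that the two displayed $R$-algebra maps are inverse to each other. Abstractly this is just the triviality of an affine-line bundle over an affine base whose direction is a stably free (hence free) rank-one module, but it is shortest to write the isomorphism down by hand — and the whole point is that the required B\'ezout coefficients are supplied \emph{for free} by the defining relation $E_2$. The one subtlety to get right is the identification that $E_2,\dots,E_{n+1}$ of $\TA_{n+1}$ really are the defining equations of $Z_{\TA_n}$ after relabelling (in particular in the shape $n=1$ used to start the induction).
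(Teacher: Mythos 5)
Your proposal is correct and follows essentially the same route as the paper: the same morphism $\phi$ obtained by forgetting the first exchange relation and letting $x_1$ play the role of the coefficient, the same key observation that $x_1$ and $x_2$ cannot vanish simultaneously (so each fiber is an affine line), and the same induction from $Z_{\TA_1}\simeq A^2$. The only difference is that you make explicit, via the B\'ezout identity supplied by the second exchange relation, the trivialisation $Z_{\TA_{n+1}}\simeq Z_{\TA_n}\times A^1$, a step the paper leaves implicit in ``the expected isomorphism follows by induction''; this is a useful filling-in of detail rather than a different argument.
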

\begin{proof}
  The morphism $\phi$ from $Z_{\TA_{n+1}}$ to $Z_{\TA_n}$ is defined
  by forgetting the first equation \eqref{exchange}:
  \begin{equation}
    x_1 x'_1 = 1+\alpha x_2.
  \end{equation}
  One simply has to shift down the indices of variables $x_i$ and
  $x'_i$ for $i\geq 2$ and let $x_1$ play the role of $\alpha$.

  As it is not possible that both $x_1$ and $x_2$ vanish (by the
  second equation \eqref{exchange}), the first equation is the
  equation of a line in the plane with coordinates
  $x'_1,\alpha$. Therefore every fiber of $\phi$ is a line.

  One can easily check that $Z_{\TA_1} \simeq A^{2}$. Then the
  expected isomorphism follows by induction.
\end{proof}

\begin{corollary}
  As an open set of $Z_{\TA_n}$, $Y_{\TA_n}$ is smooth.
\end{corollary}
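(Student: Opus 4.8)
The plan is to deduce smoothness of $Y_{\TA_n}$ directly from the two preceding propositions, with essentially no extra work. First I would recall that Proposition~\ref{z_is_a} provides an isomorphism $Z_{\TA_n}\simeq A^{n+1}$, so that $Z_{\TA_n}$ is a smooth variety. Next I would use Proposition~\ref{Yopen_Yclosed}, which presents $Z_{\TA_n}$ as the disjoint union of $Y_{\TA_n}$ (the locus where the parameter $\alpha$ is invertible) and $Y_{\TA_{n-1}}$ (the locus $\alpha=0$). Since the zero set of the regular function $\alpha$ on $Z_{\TA_n}$ is a closed subscheme, $Y_{\TA_{n-1}}$ is closed in $Z_{\TA_n}$, and hence $Y_{\TA_n}$ is its open complement.

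It then remains only to invoke the general fact that an open subscheme of a smooth scheme is smooth (smoothness is a local property and is preserved under open immersions). Applying this to the open immersion $Y_{\TA_n}\hookrightarrow Z_{\TA_n}\simeq A^{n+1}$ gives the claim; one may even record that $Y_{\TA_n}$ is then a smooth affine variety of dimension $n+1$, isomorphic to the complement of the hyperplane $\{\alpha=0\}$ in $A^{n+1}$.

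I do not anticipate a genuine obstacle here. The only point that deserves an explicit word is that $Y_{\TA_n}$ is \emph{open} in $Z_{\TA_n}$ rather than merely a constructible piece, and this is exactly what the description as the non-vanishing locus of the coordinate $\alpha$ (equivalently, as $Z_{\TA_n}\setminus Y_{\TA_{n-1}}$ with $Y_{\TA_{n-1}}$ closed) supplies.
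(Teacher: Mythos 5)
Your argument is correct and is precisely the one the paper intends: the corollary follows from Proposition~\ref{z_is_a} ($Z_{\TA_n}\simeq A^{n+1}$ is smooth) together with the observation that $Y_{\TA_n}$ is the open locus $\alpha\neq 0$ in $Z_{\TA_n}$, and openness of smoothness does the rest. No gap; this matches the paper's (unwritten) proof.
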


Recall that the cohomology with compact support of the affine space $A^{n}$ is very
simple: the only non-zero group is $H_c^{2n}(A^{n})\simeq \QQ(n)$,
where $\QQ(n)$ is the Tate Hodge structure of weight $n$.

\begin{proposition}
  \label{coho_Y}
  For $n\geq 0$, the non-zero cohomology groups with compact support
  of $Y_{\TA_n}$ are
  \begin{equation}
    H_c^{i+n+1}(Y_{\TA_n})\simeq \QQ(i),
  \end{equation} 
 for $ 0 \leq i \leq n+1$.
\end{proposition}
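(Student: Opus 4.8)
The plan is to proceed by induction on $n$, feeding the decomposition of $Z_{\TA_n}$ from Proposition~\ref{Yopen_Yclosed} into the long exact sequence for cohomology with compact support. The base case is $n=0$, where $Y_{\TA_0}=A^1\setminus\{0\}$; its only non-zero compactly supported cohomology groups are $H_c^1\simeq\QQ(0)$ and $H_c^2\simeq\QQ(1)$, which is exactly the asserted statement for $i=0,1$.

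For the inductive step, fix $n\geq 1$ and assume the statement for $Y_{\TA_{n-1}}$. By Proposition~\ref{Yopen_Yclosed}, $Y_{\TA_n}$ is an open subvariety of $Z_{\TA_n}$ with closed complement $Y_{\TA_{n-1}}$, so there is a long exact sequence of mixed Hodge structures
\[
\cdots \to H_c^i(Y_{\TA_n}) \to H_c^i(Z_{\TA_n}) \to H_c^i(Y_{\TA_{n-1}}) \to H_c^{i+1}(Y_{\TA_n}) \to \cdots
\]
By Proposition~\ref{z_is_a} we may replace $Z_{\TA_n}$ by $A^{n+1}$, whose only non-zero compactly supported cohomology is $H_c^{2n+2}\simeq\QQ(n+1)$; and the induction hypothesis describes $H_c^\bullet(Y_{\TA_{n-1}})$ completely, non-zero only in degrees $n$ through $2n$ (consistently with $\dim Y_{\TA_{n-1}}=n$). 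In every degree $i\leq 2n$ both neighbouring groups $H_c^i(A^{n+1})$ and $H_c^{i+1}(A^{n+1})$ vanish, so the sequence yields isomorphisms $H_c^i(Y_{\TA_{n-1}})\simeq H_c^{i+1}(Y_{\TA_n})$, compatible with Tate twists because the boundary map of this open--closed sequence carries no twist. This shifts the staircase of $Y_{\TA_{n-1}}$ up by one in cohomological degree: it produces $H_c^j(Y_{\TA_n})\simeq\QQ(j-n-1)$ for $n+1\leq j\leq 2n+1$, and $H_c^j(Y_{\TA_n})=0$ for $j\leq n$.

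The one degree not covered this way is the top one. Near $i=2n+2$ the groups $H_c^{2n+1}(Y_{\TA_{n-1}})$ and $H_c^{2n+2}(Y_{\TA_{n-1}})$ both vanish by the induction hypothesis, so the relevant part of the sequence collapses to $0\to H_c^{2n+2}(Y_{\TA_n})\to\QQ(n+1)\to 0$, giving $H_c^{2n+2}(Y_{\TA_n})\simeq\QQ(n+1)$; and $H_c^j(Y_{\TA_n})=0$ for $j\geq 2n+3$ (either from the same exact sequence, or since $Y_{\TA_n}$ is open dense in $A^{n+1}$ so has dimension $n+1$). Assembling the cases gives the formula. The argument is essentially routine; the only points needing care are the bookkeeping of cohomological degrees at the boundary values $i=2n+1,2n+2$ where the long exact sequence does not split into isomorphisms, and checking that all identifications respect the Hodge/Tate weights, which holds precisely because we use the excision (open--closed) sequence rather than a Gysin sequence. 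As a consistency check, the alternating sum of the resulting groups reproduces the point count $\tfrac{q^{n+2}+(-1)^{n+1}}{q+1}$ of Lemma~\ref{valeurAsomme}.
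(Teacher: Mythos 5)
Your proof is correct and follows exactly the route the paper takes (and merely sketches): induction on $n$ starting from $Y_{\TA_0}=A^1\setminus\{0\}$, the long exact sequence in compactly supported cohomology for the open--closed decomposition $Z_{\TA_n}=Y_{\TA_n}\sqcup Y_{\TA_{n-1}}$ of Proposition~\ref{Yopen_Yclosed}, and the identification $Z_{\TA_n}\simeq A^{n+1}$ of Proposition~\ref{z_is_a}. Your write-up just supplies the degree bookkeeping and weight compatibility that the paper leaves implicit.
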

\begin{proof}
  The proof is by induction on $n$. The statement is true if
  $n=0$. One then uses the long exact sequence in cohomology with compact
  support for the open-closed decomposition $Z_{\TA_n}=Y_{\TA_n}\sqcup
  Y_{\TA_{n-1}}$ (see Prop. \ref{Yopen_Yclosed}), together with Prop. \ref{z_is_a}.
\end{proof}

One can then obtain the cohomology with compact support of $X_{n}(1)$
when $n$ is even.
\begin{proposition}
  For $n\geq 0$ even, the non-zero cohomology groups with compact
  support of $X_{n}(1)$ are
  \begin{equation}
    H_c^{i+n}(X_{n}(1))\simeq \QQ(i),
  \end{equation}
  for all even $i$ between $0$ and $n$.
\end{proposition}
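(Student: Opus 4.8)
The plan is to deduce the cohomology of $X_n(1)$ from that of $Y_{\TA_n}$, already computed in Proposition \ref{coho_Y}, using the product decomposition of Lemma \ref{Y_product}. Since $n$ is even and $n\geq 1$ (the case $n=0$ being a point, where the statement says $H_c^0\simeq\QQ(0)$, which holds), Lemma \ref{Y_product} gives $Y_{\TA_n}\simeq X_n(1)\times(A^1\setminus\{0\})$. By the K\"unneth formula for cohomology with compact support, $H_c^{\bullet}(Y_{\TA_n})\simeq H_c^{\bullet}(X_n(1))\otimes H_c^{\bullet}(A^1\setminus\{0\})$ as graded objects carrying mixed Hodge structures (or Galois representations, in the $\ell$-adic setting). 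The first step is therefore to record that $H_c^{\bullet}(A^1\setminus\{0\})$ has exactly two nonzero groups: $H_c^1(A^1\setminus\{0\})\simeq\QQ(0)$ and $H_c^2(A^1\setminus\{0\})\simeq\QQ(1)$, which follows from the open-closed sequence for $A^1=(A^1\setminus\{0\})\sqcup\{0\}$.

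Next I would write out what K\"unneth predicts. If we posit that the nonzero groups of $X_n(1)$ are $H_c^{n+j}(X_n(1))\simeq\QQ(m_j)$ for $j$ in some index set, then tensoring with the two-term complex for $A^1\setminus\{0\}$ gives, in each total degree $d$, the contributions $H_c^{d-1}(X_n(1))\otimes\QQ(0)$ and $H_c^{d-2}(X_n(1))\otimes\QQ(1)$. Comparing with Proposition \ref{coho_Y}, which states $H_c^{i+n+1}(Y_{\TA_n})\simeq\QQ(i)$ for $0\leq i\leq n+1$ and nothing else, we get a system of constraints on the graded pieces of $H_c^{\bullet}(X_n(1))$. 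The cleanest way to solve it is to pass to Poincar\'e polynomials (or better, to the class in the Grothendieck ring of Hodge structures / the ``$E$-polynomial''): writing $P_{Y}(t,u)=\sum \dim H_c^{i}\,t^i u^{(\text{weight})}$ etc., the K\"unneth relation becomes $P_Y = P_{X_n(1)}\cdot(t+t^2 u)$. From Proposition \ref{coho_Y}, $P_Y = \sum_{i=0}^{n+1} t^{\,i+n+1}u^{\,i} = t^{n+1}\sum_{i=0}^{n+1}(tu)^i = t^{n+1}\frac{(tu)^{n+2}-1}{tu-1}$. Dividing by $t+t^2u = t(1+tu)$ and simplifying the geometric-series factor (here one uses $\frac{(tu)^{n+2}-1}{(tu-1)(tu+1)}=\frac{(tu)^{n+2}-1}{(tu)^2-1}$, valid since $n+2$ is even so $(tu)^{n+2}-1$ is divisible by $(tu)^2-1$) yields $P_{X_n(1)}=t^{n}\,\frac{(tu)^{n+2}-1}{(tu)^2-1}=t^n\sum_{i\ \text{even},\,0\le i\le n}(tu)^i$. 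Reading off coefficients, the nonzero groups are exactly $H_c^{n+i}(X_n(1))\simeq\QQ(i)$ for $i$ even, $0\le i\le n$, which is the claim. (As a sanity check, specializing $t\to 1$ and $u\to q$ recovers $N_{\TA_n}=(q^{n+2}-1)/(q^2-1)$ of Proposition \ref{valeurA}.)

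There are two gaps to fill rigorously. First, K\"unneth as used above gives the right Euler-characteristic/graded identity, but to conclude that $H_c^{n+i}(X_n(1))$ is itself one-dimensional and pure Tate of the stated weight we must know each term is a single Tate class with no cancellation; this follows because all the $H_c^{j}(A^1\setminus\{0\})$ and all the $H_c^j(Y_{\TA_n})$ are one-dimensional Tate of distinct weights in each degree, so the K\"unneth factors are forced (there is no room for a nontrivial extension or for two summands in different weights to coincide and cancel). This is the step I expect to be the main obstacle to state cleanly: one has to argue that the decomposition is uniquely solvable and in particular that $X_n(1)$ has no odd-degree compactly-supported cohomology, which one sees by noting that if it did, $P_{X_n(1)}$ would have a term not of the form $t^{n+i}u^i$ and multiplying by $t(1+tu)$ could not then reproduce the clean form of $P_Y$. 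Second, one should note the case $n=0$ separately: $X_0(1)$ is a point, $H_c^0\simeq\QQ(0)$, matching ``$i=0$'' — and indeed $Y_{\TA_0}=A^1\setminus\{0\}$ is consistent with the product formula. With these remarks in place the induction-free argument is complete; alternatively one could give a direct induction on even $n$ using leaf-removal twice (Proposition \ref{leaf_removal}) together with the long exact sequence, but the K\"unneth route via Lemma \ref{Y_product} is shorter.
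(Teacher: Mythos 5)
Your argument is correct, but it takes a genuinely different route from the paper. The paper proceeds by induction on $n$: it applies the leaf-removal decomposition $X_{n}(1)=Y_{\TA_{n-1}} \sqcup A^1\times X_{n-2}(1)$ (Proposition \ref{leaf_removal}), extracts from the associated long exact sequence the possible shapes of $H_c^{\bullet}(X_n(1))$, and only then invokes the K\"unneth isomorphism for $Y_{\TA_n}\simeq (A^1\setminus\{0\})\times X_n(1)$ together with Proposition \ref{coho_Y} to exclude the bad alternative (a two-dimensional extension appearing in the exact sequence). You instead dispense with the induction and the long exact sequence altogether: you read off $H_c^{\bullet}(X_n(1))$ directly from Proposition \ref{coho_Y} and Lemma \ref{Y_product}, by ``dividing'' the known cohomology of $Y_{\TA_n}$ by that of $A^1\setminus\{0\}$. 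This is legitimate, and arguably cleaner, provided the division step is phrased rigorously: the honest version is not the Poincar\'e-polynomial manipulation (which presupposes that each $H_c^j(X_n(1))$ is a sum of Tate structures), but the degree-by-degree dimension recursion $\dim H_c^{d-1}(X_n(1))+\dim H_c^{d-2}(X_n(1))=\dim H_c^{d}(Y_{\TA_n})$ coming from K\"unneth, which, starting from the vanishing in low degrees, uniquely determines all dimensions by induction on $d$ (in particular forces the vanishing in the degrees $n+i$ with $i$ odd); once each surviving group is known to be one-dimensional and its K\"unneth partner vanishes, the identification $H_c^{n+i}(X_n(1))\otimes\QQ(0)\simeq\QQ(i)$ pins down the Tate twist. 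You flag exactly this point in your ``gaps to fill'' discussion, so the argument is complete in substance. In exchange for this small bookkeeping argument, your route avoids the paper's case analysis of possible extensions in the long exact sequence; the paper's inductive route, on the other hand, makes the role of the leaf-removal geometry explicit and is the pattern it reuses elsewhere.
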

\begin{proof}
  By induction on $n$. The statement is true for $n=0$ with the
  natural convention that $X_{\TA_0}(\,)$ is a point.

  One uses two ingredients. The first one is the long exact sequence
  for the open-closed decomposition $X_{n}(1)=Y_{\TA_{n-1}} \sqcup A^1
  \times X_{n-2}$. The second one is the K{\"u}nneth isomorphism
  describing the cohomology of the product $Y_{\TA_n} \simeq
  A^1\setminus \{0\} \times X_{n}(1)$ (see Lemma \ref{Y_product}). We
  also need the fact that the cohomology with compact support of
  $A^1\setminus \{0\}$ is $\QQ(0)$ in degree $1$ and $\QQ(1)$ in
  degree $2$.

  From the long exact sequence, one gets exact sequences
  \begin{equation}
    0 \lra \QQ(i) \lra H_c^{i+n}(X_{n}(1)) \lra \QQ(i+1) \lra \QQ(i+1) \lra H_c^{i+n+1}(X_{n}(1)) \lra 0,
  \end{equation}
  for even $i$ between $0$ and $n$.

  One would like to conclude that $H_c^{i+n}(X_{\TA_{n+2}}) \simeq
  \QQ(i)$ and $H_c^{i+n+1}(X_{\TA_{n+2}}) \simeq 0$.

  Assume on the contrary that, for some even $i$,
  $H_c^{i+n+1}(X_{\TA_{n}}) \simeq \QQ(i+1)$ and
  $H_c^{i+n}(X_{\TA_{n}})$ is an extension of $\QQ(i)$ by $\QQ(i+1)$,
  hence has dimension $2$.

  Then the K{\"u}nneth formula would imply that
  $H_c^{i+n+1}(Y_{\TA_{n}})$ has dimension at least $2$, which is
  absurd, as $H_c^{i+n+1}(Y_{\TA_{n}})$ is $\QQ(i)$ by
  Prop. \ref{coho_Y}.
\end{proof}

\subsection{Smoothness}

Let us prove that the varieties $X_n(\alpha)$ are smooth for generic
$\alpha$. Recall that $\alpha$ is assumed to be invertible.

\begin{proposition}
  For $n$ even, $X_n(\alpha)$ is smooth.

  For $n$ odd and $\alpha \not = (-1)^{(n+1)/2}$, $X_n(\alpha)$ is smooth.

  For $n$ odd and $\alpha = (-1)^{(n+1)/2}$, $X_n(\alpha)$ has a
  unique singular point: $x_i=x'_i=0$ for odd $i$ and $x_i=x'_i=-(-1)^{(n+i)/2}$
  for even $i$.
\end{proposition}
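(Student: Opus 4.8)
The plan is to use the Jacobian criterion directly. Realise $X_n(\alpha)$ as the closed subscheme of affine space $A^{2n}$, with coordinates $x_1,\dots,x_n,x'_1,\dots,x'_n$, cut out by the $n$ polynomials $f_t=x_tx'_t-1-\alpha_t\prod_{s-t}x_s$, where $\alpha_1=\alpha$ and $\alpha_i=1$ for $i\ge 2$. A point $p$ is a smooth point, necessarily of dimension $n$, exactly when the Jacobian matrix $\bigl(\partial f_t/\partial x_j,\ \partial f_t/\partial x'_j\bigr)$ has rank $n$ at $p$; establishing this at every point proves smoothness. Now the column $\partial/\partial x'_t$ of the Jacobian equals $x_t$ times the $t$-th coordinate vector, so if $x_t(p)\ne 0$ for all $t$ the Jacobian has rank $n$ and $p$ is smooth. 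Hence it suffices to analyse points where $Z:=\{t : x_t(p)=0\}$ is nonempty. For $t\in Z$ the equation $f_t=0$ forces $\prod_{s-t}x_s$, evaluated at $p$, to be nonzero, hence each $x_s(p)$ with $s-t$ to be nonzero; in particular $Z$ contains no two adjacent vertices.

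The first step is to reduce the rank computation. Using the $|Z^c|$ columns $\partial/\partial x'_t$ for $t\notin Z$ (nonzero multiples of distinct coordinate vectors, supported on the rows indexed by $Z^c$) and then, for each $t\in Z$ with $x'_t(p)\ne 0$, the column $\partial/\partial x_t$ --- which, as $Z$ is independent, meets the rows indexed by $Z$ only in row $t$, with the nonzero entry $x'_t(p)$ --- one finds
\[
\operatorname{rank}(\mathrm{Jac}_p)=n-|S|+\operatorname{rank}(M'),\qquad S:=\{t\in Z : x'_t(p)=0\},
\]
where $M'$ is the submatrix with rows indexed by $S$ and columns by the neighbours of the vertices of $S$ (a set disjoint from $Z$). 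Thus $p$ is singular iff $M'$ fails to have full row rank $|S|$. In row $t\in S$ the nonzero entries of $M'$ sit in the columns $x_{t-1},x_{t+1}$ that exist: for an interior vertex $t$ they are $-x_{t+1}(p)$ and $-x_{t-1}(p)$, while at the leaf $1$ (resp.\ $n$) the single entry is $-\alpha$ (resp.\ $-1$). Split $S$ into its maximal runs $C=\{i_0,i_0+2,\dots,i_0+2k\}$ of consecutive terms differing by $2$; distinct runs have disjoint neighbour sets, so $M'$ is block diagonal over the runs and it suffices to treat one block $M'_C$.

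The heart of the matter is the block $M'_C$, which has $k+1$ rows (one per vertex of $C$) and whose nonzero entries have a bidiagonal shape and are all of the form $\pm x_m(p)$ with $m$ a neighbour of a vertex of $Z$ (hence nonzero), or at a leaf $-\alpha$ or $-1$ (nonzero, $\alpha$ being invertible). If $C$ contains at most one of the two leaves $1,n$, then a short determinant computation --- in the three cases $C$ interior, $C$ adjacent to leaf $1$, $C$ adjacent to leaf $n$ --- exhibits a $(k+1)\times(k+1)$ submatrix of $M'_C$ which, after at most one cofactor expansion along a row with a unique nonzero entry $-\alpha$ or $-1$, is triangular with nonzero diagonal; so $M'_C$ has full row rank and yields no singular point. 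The one escape is $i_0=1$ and $i_0+2k=n$ simultaneously, which forces $n$ odd and $C=S=Z=\{1,3,\dots,n\}$, the unique maximal independent set of the path meeting both leaves; then $M'_C$ has $\tfrac{n+1}{2}$ rows but only $\tfrac{n-1}{2}$ columns, cannot have full row rank, and $p$ is singular. Finally such a $p$ exists for one value of $\alpha$ only: imposing $x_i=x'_i=0$ for odd $i$, the odd-indexed equations determine the even-indexed coordinates recursively --- $x_2=-1/\alpha$ from $f_1$, then $x_{i+1}=-1/x_{i-1}$ for odd $i\in[3,n-2]$, and $x_{n-1}=-1$ from $f_n$ --- and unwinding this alternating recursion shows consistency precisely when $\alpha=(-1)^{(n+1)/2}$, in which case all coordinates (including $x'_i=1/x_i$ for even $i$, from $f_i=0$) are uniquely pinned down to the point in the statement, which one checks lies on $X_n(\alpha)$.

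Putting the pieces together: if $n$ is even, or $n$ is odd with $\alpha\ne(-1)^{(n+1)/2}$, no block $M'_C$ can fail full row rank, so $\operatorname{rank}(\mathrm{Jac}_p)=n$ everywhere and $X_n(\alpha)$ is smooth; if $n$ is odd and $\alpha=(-1)^{(n+1)/2}$, the rank drops --- to $n-1$ --- only at the single point exhibited above, which is therefore the unique singular point. I expect the main obstacle to be the bookkeeping in the third paragraph: organising $M'_C$ for the three positions of the run $C$ and checking in each case that the relevant square minor is triangular with non-vanishing diagonal, together with the sign-chase needed to identify the exceptional value of $\alpha$ and to match the explicit coordinates in the statement.
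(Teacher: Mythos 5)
Your argument is correct, but it takes a genuinely different route from the paper's. The paper argues by induction on $n$: assuming a singular point on $X_{n+2}(\alpha)$, it splits into the cases $x_1=0$ (with subcases $x'_1=0$ or not) and $x_1\neq 0$, and in each case shows that the vanishing of the rank-$n$ minors of $M_{n+2}(\alpha)$ forces the vanishing of the minors of a smaller matrix, namely $M_{n}(-1/\alpha)$ or $M_{n+1}(x_1)$ possibly extended by one extra column; the restricted point is then a singular point of $X_{n}(-1/\alpha)$ or $X_{n+1}(x_1)$, and the induction hypothesis (which already carries the explicit exceptional point) either yields a contradiction or pins down the unique candidate, with a final check that the extended matrix has a non-vanishing minor there. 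You instead analyse the Jacobian globally and non-inductively: the vanishing set $Z=\{t:x_t=0\}$ is independent, the rank defect is concentrated on $S=\{t\in Z:x'_t=0\}$ via the identity $\operatorname{rank}=n-|S|+\operatorname{rank}(M')$, and $M'$ is block bidiagonal over the maximal $2$-runs of $S$, each block having full row rank (triangular square minor with entries $-x_m\neq0$, $-\alpha$ or $-1$ on the diagonal) except the run joining both leaves, which exists only for $n$ odd and, by the alternating recursion $x_2=-1/\alpha$, $x_{i+2}=-1/x_i$, $x_{n-1}=-1$, is realized on the variety exactly when $\alpha=(-1)^{(n+1)/2}$, at the unique point of the statement. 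Your route buys an exact rank computation (drop to $n-1$), a structural description of where singular points can occur, and a method that would extend to other trees; the paper's route buys brevity by recycling its leaf-removal reduction and small extended-matrix checks, at the price of carrying the explicit point through the induction. Two small cautions on your side: the implication ``Jacobian rank $<n$ implies singular'' needs that $X_n(\alpha)$ has dimension $n$ at the point (or, as the paper implicitly does, one takes the vanishing of all $n\times n$ minors as the definition of a singular point) --- one sentence suffices; and when you say the coordinates match ``the point in the statement,'' note that the stated exponent $(n+i)/2$ is not an integer for $n$ odd and $i$ even (a typo in the paper), whereas your recursion produces the correct alternating values, so you should record them explicitly rather than by reference.
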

\begin{proof}
  The proof is by induction on $n$. The statement is clear if $n=0,1$.

  Assume that there is a singular point on $X_{n+2}(\alpha)$.
  
  The equations defining a singular point on $X_{n+2}(\alpha)$ are the
  $n$ equations of $X_{n+2}(\alpha)$ together with the vanishing of
  all minors of rank $n$ of the $2n \times n$ matrix $M_{n+2}(\alpha)$
  of partial derivatives of these $n$ equations with respect to
  variables $x_1,\dots,x_n,x'_1,\dots,x'_n$. This matrix
  $M_{n+2}(\alpha)$ looks as follows:
  \begin{equation}
    \begin{bmatrix}
      x'_1 & -\alpha & 0 & \dots & 0            &x_1 & 0 & & \dots & 0 \\
      -x_3 & x'_2 & -x_1 & 0  &          &0 & x_2 & 0 & &  \\
      0 & -x_4 & x'_3 & -x_2 & 0        &0 & 0   & x_3 & 0 &  \\
      &\ddots&\ddots&\ddots&                        &&&\ddots&\ddots&\ddots\\
      0 & \dots & 0 & -1 & x'_n        &0 & \dots & & 0 & x_n
    \end{bmatrix}
  \end{equation}

  Let us distinguish $2$ cases and some sub-cases.

  \textbf{First case} : $x_1=0$.
  
  Using the equations, this hypothesis implies that
  $x_2=-1/\alpha$ and $x'_2=-\alpha$.

  \underline{Assume} first that $x'_1=0$. Then the vanishing of all
  minors of the matrix $M_{n+2}(\alpha)$ reduces to the vanishing of
  all minors of the matrix $M_{n}(-1/\alpha)$ (with a shift of indices
  by $2$). So the singular point gives, by restriction to coordinates
  $(x_i,x'_i)_{i\geq 3}$, a singular point on $X_{n}(-1/\alpha)$.

  If $n+2$ is even or $n+2$ is odd and $\alpha\not=(-1)^{(n+2+1)/2}$, this
  is absurd by induction hypothesis.

  If $n+2$ is odd and $\alpha=(-1)^{(n+2+1)/2}$, there is only one
  solution by induction hypothesis: $x_i=x'_i=0$ for odd $i\geq 3$ and
  $x_i=x'_i=-(-1)^{(n+i)/2}$ for even $i\geq 2$. It is readily checked that the
  point $(x_i,x'_i)_{i\geq 1}$ is indeed a singular point on $X_{n+2}(\alpha)$.

  \underline{Assume} on the contrary that $x'_1\not=0$. Then the
  vanishing of all minors of the matrix $M_{n+2}(\alpha)$ reduces to
  the vanishing of all minors of an extended matrix which is made of
  $M_n(-1/\alpha)$ plus one more column on the left:
  \begin{equation}
    \begin{bmatrix}
      -x_4   & x'_3 & -x_2 & 0    & \dots & 0  & x_3 & 0   &      \dots & 0 \\
      0      & -x_5 & x'_4 & -x_3 & 0     &    & 0   & x_4 & 0          &   \\
      \vdots &     &\ddots &\ddots&\ddots&    &    &  &\ddots  &\ddots\\
      0      & 0    &\dots & 0   & -1    &x'_n& 0    &\dots  & 0     &  x_n
    \end{bmatrix}
  \end{equation}
  In particular, by restriction to coordinates $(x_i,x'_i)_{i\geq 3}$,
  we obtain a singular point on $X_{n}(-1/\alpha)$.

  If $n+2$ is even or $n+2$ is odd and $\alpha\not=(-1)^{(n+2+1)/2}$, this
  is absurd by induction hypothesis.
  
  If $n+2$ is odd and $\alpha=(-1)^{(n+2+1)/2}$, there is only one
  solution by induction hypothesis: $x_i=x'_i=0$ for odd $i\geq 3$ and
  $x_i=x'_i=-(-1)^{(n+i)/2}$ for even $i\geq 2$. One can then check that the
  extended matrix has a non-vanishing minor at this point, and
  therefore $(x_i,x'_i)_{i\geq 1}$ is not a singular point on
  $X_{n+2}(\alpha)$. This is absurd.

  \textbf{Second case} : $x_1\not=0$. Then the vanishing of all minors
  of the matrix $M_{n+2}(\alpha)$ reduces to the vanishing of all minors of an
  an extended matrix which is made of $M_{n+1}(x_1)$ (with a shift of
  indices by $1$) plus one more column on the left:
  \begin{equation}
    \begin{bmatrix}
      -x_3   & x'_2 & -x_1 & 0    & \dots & 0  & x_2 & 0   &      \dots & 0 \\
      0      & -x_4 & x'_3 & -x_2 & 0     &    & 0   & x_3 & 0          &   \\
      \vdots &     &\ddots &\ddots&\ddots&    &    &  &\ddots  &\ddots\\
      0      & 0    &\dots & 0   & -1    &x'_n& 0    &\dots  & 0     &  x_n
    \end{bmatrix}
  \end{equation}
  In particular, by restriction to coordinates $(x_i,x'_i)_{i\geq 2}$,
  we obtain a singular point on $X_{n+1}(x_1)$.

  If $n+2$ is odd, this is absurd by induction hypothesis.

  If $n+2$ is even, then there is only one possible solution by induction
  hypothesis: $x_i=x'_i=0$ for even $i\geq 2$ and $x_i=x'_i=-(-1)^{(n+1+i)/2}$ for
  odd $i\geq 3$. One can then check that the extended matrix has a
  non-vanishing minor at this point, which is therefore not a singular
  point on $X_{n+2}(\alpha)$.
 
\end{proof}

% \subsection{Compactification}

% Let us introduce a projective variety $\overline{X}_n(\alpha)$ containing
% $X_n(\alpha)$, defined by the homogeneous equations (every variable
% has weight $1$, but $\alpha$ which is a fixed parameter):
% \begin{align*}
%   x_1 x'_1 & = t^2 + \alpha t x_2,\\
%   x_2 x'_2 & = t^2 + x_1 x_3,\\
%   & \dots \\
%   x_{n-1} x'_{n-1} & = t^2 + x_{n-2} x_n,\\
%   x_n x'_n & = t^2 +  t x_{n-1}.
% \end{align*}

% Then the complement of $X_n(\alpha)$ in $\overline{X}_n(\alpha)$ is the
% divisor $D_n$ defined by
% \begin{align*}
%   x_1 x'_1 & = 0,\\
%   x_2 x'_2 & =  x_1 x_3,\\
%   & \dots \\
%   x_{n-1} x'_{n-1} & =  x_{n-2} x_n,\\
%   x_n x'_n & = 0.
% \end{align*}
% This divisor $D_n$ does not depend on $\alpha$.

% PROBLEM :  $\overline{X}_n(\alpha)$ is not smooth for generic
% $\alpha$ !

% By Ehresmann theorem, the cohomology with compact support of
% $\overline{X}_n(\alpha)$ does not depend on $\alpha$. Using the long exact
% sequence in cohomology with compact support, this implies that the
% dimensions of cohomology groups with compact support of $X_n(\alpha)$
% are upper semi-continuous functions of $\alpha$.

% Therefore the cohomology groups with compact support of $X_n(\alpha)$
% are constant for $\alpha$ in a Zariski open subset.

\section{Type $\TD$}

We will now consider the Dynkin diagrams of type $\TD$.

\begin{center}
\begin{tikzpicture}[scale=0.6]
\tikzstyle{every node}=[draw,shape=circle,very thick,fill=white]

\draw (0.3,0.7) node[fill=blue!50] {} -- (1,0) -- (2,0) node {} -- (3,0) node {} -- (4,0) node {} -- (5,0) node{};

\draw (0.3,-0.7) node[fill=red!50] {} -- (1,0) node {};

\end{tikzpicture}
\end{center}

In type $\TD_n$, the cluster algebra is generated by $n$ cluster
variables $x_1,x_2,x_3,\dots,x_n$ (which form a cluster), the $n$ adjacent
cluster variables $x_1',x_2',x'_3,\dots,x'_n$ and $n$ coefficient variables
$\alpha_1,\dots,\alpha_n$ with the following relations:
\begin{align*}
  x_1 x'_1 & = 1 + \alpha_1 x_3,\\
  x_2 x'_2 & = 1 + \alpha_2 x_3,\\
  x_3 x'_3 & = 1 + \alpha_3 x_1 x_2 x_4,\\
  x_4 x'_4 & = 1 + \alpha_4 x_3 x_5,\\
  & \dots \\
  x_{n-1} x'_{n-1} & = 1 + \alpha_{n-1} x_{n-2} x_n,\\
  x_n x'_n & = 1 + \alpha_n x_{n-1}.
\end{align*}

Let us call $X_{\TD_n}(\alpha_1,\dots,\alpha_n)$ this variety. As
before, we assume that the $\alpha_i$ are invertible.

\begin{proposition}
  If $n$ is even, then $X_{\TD_n}(\alpha_1,\dots,\alpha_n)\simeq
  X_{\TD_n}(\alpha,\beta,1,\dots,1)$, for some $\alpha,\beta$ depending
  on the $\alpha_i$.

  If $n$ is odd, then $X_{\TD_n}(\alpha_1,\dots,\alpha_n)\simeq X_{\TD_n}(\alpha,1,\dots,1)$, for some $\alpha$ depending on the $\alpha_i$.
\end{proposition}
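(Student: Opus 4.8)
The plan is to reduce the coefficients $\alpha_i$ to the claimed normal forms by applying Proposition~\ref{reduction} to a well-chosen (partial) domino tiling of the type-$\TD_n$ Dynkin diagram, exactly as was done for type $\TA$. First I would fix the bipartite black/white coloring: in the picture the trivalent vertex (vertex $3$) and the two branch leaves (vertices $1$ and $2$) must be colored so that every edge is bichromatic; since $1$ and $2$ are both adjacent only to $3$, they get one color and $3$ gets the other, and the colors then alternate down the tail $3-4-5-\cdots-n$. So vertices $\{1,2,4,6,\dots\}$ form one color class and $\{3,5,7,\dots\}$ the other.

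Next I would exhibit the tiling. For $n$ odd, the tail $3-4-5-\cdots-n$ has an even number of vertices $n-2$, so it admits a full matching by dominoes $\{3,4\},\{5,6\},\dots,\{n-1,n\}$, leaving only the two branch leaves $1$ and $2$ uncovered; since $1$ and $2$ lie in the same color class, Proposition~\ref{reduction} lets us set $\beta_s=1$ for every covered vertex, i.e.\ for all of $3,\dots,n$, and also (by running the argument on the other color class, whose only uncovered representatives among the covered-vertex business are none) we keep $\alpha_1$ adjustable. Actually the cleanest statement: the uncovered vertices are $1$ and $2$, both the same color, so we may use Lemma~\ref{saute_mouton} to further merge — jumping the value at $2$ over $3$ spreads it onto $1$ and $4$; combined with the reduction this collapses $(\alpha_1,\alpha_2)$ to a single parameter $\alpha$ at vertex $1$, giving $X_{\TD_n}(\alpha,1,\dots,1)$. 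For $n$ even, the tail $3-4-\cdots-n$ has an odd number of vertices, so any matching of it leaves exactly one tail-vertex uncovered; choosing dominoes $\{4,5\},\{6,7\},\dots,\{n-1,n\}$ leaves vertices $1,2,3$ uncovered, and Proposition~\ref{reduction} gives $\beta_s=1$ for $s\in\{4,\dots,n\}$. Now vertex $3$ is one color and vertices $1,2$ the other; using Lemma~\ref{saute_mouton} to flip the value at $1$ over $3$ (which spreads it onto $2$ and $4$) we can absorb one of the three remaining parameters, landing on the normal form $X_{\TD_n}(\alpha,\beta,1,\dots,1)$ with two free parameters.

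The one point that needs care is the bookkeeping of how many independent parameters survive, and this is exactly the expected main obstacle: one must check that the flips of Lemma~\ref{saute_mouton} at the branch do not secretly impose a relation forcing, say, $\beta=1$, and conversely that they genuinely remove the right number of parameters so that the count of free $\alpha$'s matches $1$ (odd) and $2$ (even). I would verify this by tracking the effect of each application of Lemma~\ref{saute_mouton} on the multiset $(\alpha_t)$ explicitly near vertices $1,2,3,4$, noting that the only nontrivial interaction is at the trivalent vertex $3$, and that a single flip over $3$ is enough (and in the odd case, one further flip over $3$ from the other leaf suffices) to reach the stated form. Finally I would remark, as in the type-$\TA$ discussion, that the residual parameter(s) depend only on the original $\alpha_i$ and not on any choice, and that the proof is otherwise a routine instance of Proposition~\ref{reduction} together with the reordering of flips guaranteed by Lemma~\ref{white_leaf}.
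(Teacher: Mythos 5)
Your overall strategy (normalize the coefficients via Proposition~\ref{reduction} applied to a suitable partial domino tiling) is exactly the paper's, but the execution contains a genuine parity error that propagates through both cases. The tail $3-4-\cdots-n$ of $\TD_n$ has $n-2$ vertices, so it admits a full matching $\{3,4\},\{5,6\},\dots,\{n-1,n\}$ precisely when $n$ is \emph{even}, not odd as you assert; likewise your even-case matching $\{4,5\},\dots,\{n-1,n\}$ of the segment $4,\dots,n$ (which has $n-3$ vertices) exists only when $n$ is \emph{odd}. In other words, you have swapped the two cases: the tiling leaving only the leaves $1,2$ uncovered is the one to use for $n$ even (and it already yields $X_{\TD_n}(\alpha,\beta,1,\dots,1)$ with no further work), while for $n$ odd the correct tiling is $\{2,3\},\{4,5\},\dots,\{n-1,n\}$, which avoids only vertex $1$ and yields $X_{\TD_n}(\alpha,1,\dots,1)$ directly. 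This is the proof in the paper; once the parities are set straight, no extra manipulations are needed.

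The ad hoc flips you add to repair the parameter count do not work as described, and they hide a second gap. By Lemma~\ref{saute_mouton}, jumping the value at vertex $2$ (or $1$) over the trivalent vertex $3$ sets that value to $1$ but divides the values at \emph{both} other neighbors of $3$, i.e.\ at vertex $1$ (resp.\ $2$) \emph{and} at vertex $4$; since vertex $4$ has already been normalized to $1$ by the reduction, the flip un-normalizes the tail, so you do not land on the claimed normal form without redoing the reduction (and checking this terminates). Moreover, in your ``even'' case the uncovered trivalent vertex $3$ retains its arbitrary coefficient $\alpha_3$, which your argument never reduces to $1$: after your flip the nontrivial values sit at vertices $2$, $3$ and $4$, not at $1$ and $2$ as required. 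Both problems disappear if you simply use the correct tilings above, because then every vertex other than $1$ (odd case) or $1,2$ (even case) is covered by a domino and Proposition~\ref{reduction} alone finishes the proof.
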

\begin{proof}
  This is obtained by applying Proposition \ref{reduction} to the
  partial domino tiling avoiding the first two vertices (even case) or
  to the partial domino tiling avoiding only the first vertex (odd
  case).
\end{proof}

Let us introduce some notation for the number of points of these
varieties over finite fields. If $n$ is odd, we will denote
$N_{\TD_n}(\alpha)$ the number of points of
$X_{\TD_n}(\alpha,1,\dots,1)$. If $n$ is even, we will denote
$N_{\TD_n}(\alpha,\beta)$ the number of points of
$X_{\TD_n}(\alpha,\beta,1,\dots,1)$.

\begin{proposition}
  \label{valeurD}
  If $n$ is odd and $\alpha \not= 1$, then
  \begin{equation}
    \label{genDodd}
    N_{\TD_n}(\alpha)=q^n-1.
  \end{equation}
  If $n$ is odd, then  
  \begin{equation}
    N_{\TD_n}(1)=q^n-1+q^2 \frac{q^{n-1}-1}{q^2-1}.
  \end{equation}
  If $n$ is even, $\alpha\not=\beta$, $\alpha\not=(-1)^{n/2}$ and
  $\beta\not=(-1)^{n/2}$, then
  \begin{equation}
    \label{genDeven}
    N_{\TD_n}(\alpha,\beta)=(q^{n/2}-1)^2.
  \end{equation}
  If $n$ is even, and $\alpha=\beta$ differs from $(-1)^{n/2}$, then
  \begin{equation}
     N_{\TD_n}(\alpha,\alpha)=(q^{n/2}-1)^2+q^2\frac{(q^{(n-2)/2}-1)(q^{n/2}-1)}{q^2-1}.
  \end{equation}
  If $n$ is even, and $\alpha\not=\beta$ and $\alpha=(-1)^{n/2}$, then
  \begin{equation}
     N_{\TD_n}((-1)^{n/2},\beta)=(q^{n/2}-1)^2+(q-1) q^{n/2}.
  \end{equation}
  If $n$ is even, and $\alpha=\beta=(-1)^{n/2}$, then $ N_{\TD_n}((-1)^{n/2},(-1)^{n/2}) $ equals
  \begin{equation}
    (q^{n/2}-1)^2+2(q-1) q^{n/2}+q^2\frac{(q^{(n-2)/2}-1)(q^{n/2}-1)}{q^2-1}+q^{(n+2)/2}.
  \end{equation}
\end{proposition}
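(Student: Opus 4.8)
The plan is to reduce everything to the type-$\TA$ counts of Proposition~\ref{valeurA} by a single use of Proposition~\ref{leaf_removal}, taking for the removed leaf $f$ one of the two leaves attached to the trivalent vertex of $\TD_n$, say the vertex $1$, so that $g$ is the vertex $3$. Removing $1$ turns the tree into the path $2-3-4-\cdots-n=\TA_{n-1}$, while removing both $1$ and $3$ leaves the isolated vertex $2$ together with the path $4-5-\cdots-n$, i.e.\ $\TA_1\sqcup\TA_{n-3}$ (the conventions $\TA_0=\mathrm{pt}$ and $N_{\TA_0}=1$ take care of the smallest values of $n$). Reading off the functions produced by Proposition~\ref{leaf_removal}: on $T''$ one gets the coefficient $-\alpha_2/\alpha_1$ at vertex $2$ and $-\alpha_4/\alpha_1=-1/\alpha_1$ at vertex $4$, all other coefficients being $1$; on $T'$ the fibre over $t\in A^1\setminus\{0\}$ has coefficient $\alpha_3 t=t$ at vertex $3$, all others being $1$. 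Since a disjoint union contributes the product of point-counts, Proposition~\ref{leaf_removal} gives
\begin{equation}
  \#X_{\TD_n}=q\,N_{\TA_1}(-\alpha_2/\alpha_1)\,N_{\TA_{n-3}}(-1/\alpha_1)+(q-1)\,F,
\end{equation}
where $F$ is the number of points of the fibre $X_{\TA_{n-1}}(\balpha'(t))$.

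The decisive structural point is that $F$ does not depend on $t$. In the path $2-3-\cdots-n$ the variable $t$ sits at vertex $3$, which is an \emph{even} position; by the normalisation of type-$\TA$ coefficients (the Proposition immediately preceding Proposition~\ref{valeurA}), the isomorphism class of $X_{\TA_m}(\cdots)$ with $m$ odd depends only on the coefficients in odd position, and since the moves of Lemma~\ref{saute_mouton} are sign-free monomial substitutions, the reduced coefficient here is $\alpha_2^{\pm1}$. Hence: if $n$ is odd then $\alpha_1=\alpha$, $\alpha_2=1$, the fibre is $X_{n-1}(1)$, and $N_{\TA_{n-3}}(-1/\alpha)=N_{\TA_{n-3}}$ since $n-3$ is even; if $n$ is even then $\alpha_1=\alpha$, $\alpha_2=\beta$, the fibre is $X_{n-1}(\beta^{\pm1})$, whose point-count is the exceptional one exactly when $\beta=(-1)^{n/2}$, and $N_{\TA_{n-3}}(-1/\alpha)$ (with $n-3$ odd) takes its exceptional value exactly when $-1/\alpha=(-1)^{(n-2)/2}$, i.e.\ $\alpha=(-1)^{n/2}$. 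Finally $N_{\TA_1}(x)$ equals $q-1$ for $x\neq-1$ and $2q-1$ for $x=-1$, and $x=-\alpha_2/\alpha_1=-1$ means exactly $\alpha=\beta$ (resp.\ $\alpha=1$ in the odd case).

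It remains to substitute Proposition~\ref{valeurA} and simplify. For $n$ odd, $\#X_{\TD_n}(\alpha)=q\,N_{\TA_1}(-1/\alpha)\tfrac{q^{n-1}-1}{q^2-1}+(q-1)\tfrac{q^{n+1}-1}{q^2-1}$, which equals $q^n-1$ when $\alpha\neq1$ (use $q^{n+1}+q^n-q-1=(q+1)(q^n-1)$) and gains the single extra term $q^2\tfrac{q^{n-1}-1}{q^2-1}$, due to the jump $N_{\TA_1}(-1)=2q-1$, when $\alpha=1$. For $n$ even, $\#X_{\TD_n}(\alpha,\beta)=q\,N_{\TA_1}(-\beta/\alpha)\,N_{\TA_{n-3}}(-1/\alpha)+(q-1)\,N_{\TA_{n-1}}(\beta^{\pm1})$; the generic case collapses to $(q^{n/2}-1)^2$ via $q^{(n+2)/2}+q^{n/2}-q-1=(q+1)(q^{n/2}-1)$, and each degeneration adds one correction: $q^2\tfrac{(q^{(n-2)/2}-1)(q^{n/2}-1)}{q^2-1}$ when $\alpha=\beta$, and $(q-1)q^{n/2}$ when $\alpha$ or $\beta$ equals $(-1)^{n/2}$. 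When all these hold simultaneously, i.e.\ $\alpha=\beta=(-1)^{n/2}$, one must moreover add the cross-term $q\cdot q\cdot q^{(n-2)/2}=q^{(n+2)/2}$ coming from the product of the two simultaneously non-generic factors $N_{\TA_1}(-1)$ and $N_{\TA_{n-3}}((-1)^{(n-2)/2})$; this recovers the last displayed formula. The symmetry of $\TD_n$ exchanging the vertices $1$ and $2$ gives $N_{\TD_n}(\alpha,\beta)=N_{\TD_n}(\beta,\alpha)$, so the cases listed are exhaustive.

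The only genuinely non-formal step is the second paragraph: verifying that the reduced coefficient of the fibre is $\beta^{\pm1}$ --- so that its exceptional locus is $\beta=(-1)^{n/2}$ rather than some sign-shifted value --- and that $-1/\alpha$ hits the exceptional value of $\TA_{n-3}$ exactly at $\alpha=(-1)^{n/2}$. Both are a bookkeeping of parities and of the effect of Lemma~\ref{saute_mouton}, and no sign can intervene since that lemma only divides coefficients. Everything after that is the bounded algebra indicated above.
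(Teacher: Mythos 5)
Your proof is correct and follows essentially the same route as the paper: remove the leaf carrying $\alpha$ via Proposition \ref{leaf_removal}, obtain $N_{\TD_n}(\alpha)=qN_{\TA_1}(-1/\alpha)N_{\TA_{n-3}}+(q-1)N_{\TA_{n-1}}$ (odd case) and $N_{\TD_n}(\alpha,\beta)=qN_{\TA_1}(-\beta/\alpha)N_{\TA_{n-3}}(-1/\alpha)+(q-1)N_{\TA_{n-1}}(\beta)$ (even case), and conclude by the case analysis of Proposition \ref{valeurA}; you merely carry out explicitly the computations the paper leaves implicit. The hedge $\beta^{\pm1}$ for the reduced fiber coefficient is harmless since the exceptional value $(-1)^{n/2}$ is its own inverse, so your case discrimination agrees with the paper's.
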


\begin{proof}
  The proof uses leaf-removal (Proposition \ref{leaf_removal}) and
  knowledge of type $\TA$.

  If $n=3$, one has $X_{\TD_3}(\balpha)\simeq X_{\TA_3}(\balpha)$ and
  the statement follows from type $\TA$.

  If $n\geq 5$ is odd, let us remove the leaf with value $\alpha$. One
  gets, using Lemma \ref{saute_mouton} to compute the rightmost term,
  \begin{equation}
    N_{\TD_n}(\alpha)=q N_{\TA_1}(-1/\alpha) N_{\TA_{n-3}}+(q-1)N_{\TA_{n-1}}.
  \end{equation}
  According to results in type $\TA$, one therefore has to distinguish
  the case $\alpha=1$. One then compute using Prop. \ref{valeurA}.

  If $n$ is even, let us remove the leaf with value $\alpha$. One gets
  \begin{equation}
    N_{\TD_n}(\alpha,\beta)=q N_{\TA_1}(-\beta/\alpha) N_{\TA_{n-3}}(-1/\alpha)+(q-1)N_{\TA_{n-1}}(\beta).
  \end{equation}
  According to Prop. \ref{valeurA}, one has to distinguish according
  to three alternatives: $\beta=\alpha$ or not, $\alpha=(-1)^{n/2}$ or
  not, and $\beta=(-1)^{n/2}$ or not. One can also use the symmetry
  exchanging $\alpha$ and $\beta$. In each case, one can compute the
  result using Prop. \ref{valeurA}.
\end{proof}

\begin{remark}
  One may wonder, in type $\TD_n$ with $n$ odd, if the homotopy type
  of $X_{\TD_n}(\alpha)$ (for $\alpha \not= 1$) is that of a sphere.
\end{remark}

\section{Type $\TE$}

We will now consider the Dynkin diagrams of type $\TE$.

\begin{center}
\begin{tikzpicture}[scale=0.6]
\tikzstyle{every node}=[draw,shape=circle,very thick,fill=white]

\draw (0,0) node {} -- (1,0) node {} -- (2,0) -- (3,0) node {} -- (4,0) node {} -- (5,0) node[fill=blue!50]{};

\draw (2,1) node {} -- (2,0) node {};

\end{tikzpicture}
\end{center}

Using the general definition given for trees, one can introduce
varieties $X_{\TE_6}(\balpha)$, $X_{\TE_7}(\balpha)$ and
$X_{\TE_8}(\balpha)$ depending on invertible parameters $\balpha$.

\begin{proposition}
  Every variety $X_{\TE_6}(\balpha)$ is isomorphic to the variety
  $X_{\TE_6}(1,\dots,1)$. Every variety $X_{\TE_7}(\balpha)$ is
  isomorphic to the variety $X_{\TE_7}(1,\dots,1,\alpha)$, where
  $\alpha$ is the value on the last vertex on the long branch of
  $\TE_7$. Every variety $X_{\TE_8}(\balpha)$ is isomorphic to the
  variety $X_{\TE_8}(1,\dots,1)$.
\end{proposition}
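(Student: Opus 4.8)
The plan is to deduce all three statements from Proposition~\ref{reduction}, exactly as the analogous normal forms in types~$\TA$ and~$\TD$ were obtained. Concretely, for each of $\TE_6$, $\TE_7$ and $\TE_8$ I would exhibit a domino tiling covering as many vertices as the claimed normal form permits, and then invoke Proposition~\ref{reduction} to replace $\balpha$ by a function $\bbeta$ that equals $1$ on every covered vertex.

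Let $c$ denote the unique trivalent vertex of the diagram, and write its branches (not counting $c$) as $c-d$ (length one), $c-b_1-b_2$ (length two) and $c-a_1-a_2-\cdots$ (the longest branch). For $\TE_6$ the longest branch is $c-a_1-a_2$, and $\{c,d\},\{b_1,b_2\},\{a_1,a_2\}$ is a full domino tiling; for $\TE_8$ the longest branch is $c-a_1-a_2-a_3-a_4$, and $\{c,d\},\{b_1,b_2\},\{a_1,a_2\},\{a_3,a_4\}$ is a full domino tiling. In both cases every vertex is covered, so Proposition~\ref{reduction} gives $X_{\TE_6}(\balpha)\simeq X_{\TE_6}(1,\dots,1)$ and $X_{\TE_8}(\balpha)\simeq X_{\TE_8}(1,\dots,1)$.

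For $\TE_7$ the longest branch is $c-a_1-a_2-a_3$; since $\TE_7$ has seven (hence oddly many) vertices it admits no full domino tiling, but $\{c,d\},\{b_1,b_2\},\{a_1,a_2\}$ is a partial domino tiling whose only uncovered vertex is $a_3$, the last vertex of the long branch. Proposition~\ref{reduction} then yields $X_{\TE_7}(\balpha)\simeq X_{\TE_7}(\bbeta)$ with $\beta_s=1$ for every $s\neq a_3$; setting $\alpha=\beta_{a_3}$, this is precisely $X_{\TE_7}(1,\dots,1,\alpha)$ with $\alpha$ a function of the original parameters.

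There is essentially no obstacle here: all the substance already sits in Proposition~\ref{reduction}, and the only thing to verify is the existence of the three (partial) matchings above, which is immediate from the shape of the diagrams. If one wished to be more precise, one could add --- as in the odd cases of types~$\TA$ and~$\TD$ --- that the residual parameter $\alpha$ in the $\TE_7$ case is a specific Laurent monomial in the $\alpha_i$ rather than a genuinely free parameter, but the proposition only claims the existence of such an $\alpha$.
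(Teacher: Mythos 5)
Your proposal is correct and follows exactly the paper's route: the paper's proof simply invokes Proposition~\ref{reduction} with ``appropriate domino tilings,'' and your explicit matchings (full for $\TE_6$ and $\TE_8$, partial for $\TE_7$ leaving only the last vertex of the long branch uncovered) are precisely the appropriate ones.
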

\begin{proof}
  This follows from Proposition \ref{reduction}, using appropriate
  domino tilings.
\end{proof}

Let us introduce some notation for the number of points of these
varieties over finite fields. We will denote $N_{\TE_6}$ the number of points of
$X_{\TE_6}(1,\dots,1)$. and $N_{\TE_8}$ the number of points of
$X_{\TE_8}(1,\dots,1)$. We will denote $N_{\TE_7}(\alpha)$ the number
of points of $X_{\TE_7}(1,\dots,1,\alpha)$ where $\alpha$ is the value on the
last vertex on the long branch of $\TE_7$.

\begin{proposition}
  The number of points are as follows:
  \begin{equation}
    \begin{array}{rcl}
      N_{\TE_6} & = & q^6 + q^4 + q^3 + q^2 + 1,\\
      N_{\TE_7}(\alpha) & = & q^7 + q^5 - q^2 - 1\quad \text{ if }\quad\alpha\not=-1,\\
      N_{\TE_7}(-1) & = & q^7 + 2 q^5 +q^3 - q^2 - 1,\\
      N_{\TE_8} & = & q^8 + q^6 + q^5 + q^4 + q^3 + q^2 + 1.
    \end{array}
  \end{equation}
\end{proposition}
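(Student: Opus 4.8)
The plan is to compute each of the four numbers $N_{\TE_6}$, $N_{\TE_7}(\alpha)$ and $N_{\TE_8}$ by repeated leaf-removal (Proposition \ref{leaf_removal}), always peeling off a leaf at the end of the long branch so that the remaining tree stays within territory we already understand. Concretely, removing the long-branch leaf $f$ and its neighbour $g$ from $\TE_n$ leaves a disjoint union $T''$ whose components are smaller Dynkin diagrams of type $\TA$ and $\TD$; removing only $f$ leaves a tree $T'$ of type $\TD$ or $\TE$ with a shifted coefficient. Since the variety attached to a disjoint union is a product (the Remark after the definition of $X_T(\balpha)$), the counts over $\Fq$ multiply, and Proposition \ref{leaf_removal} then expresses $N_{\TE_n}(\balpha)$ as $q$ times the product of the counts of the pieces of $T''$ plus $\sum_{\beta\in\Fq^*}$ of the count of $X_{T'}(\balpha'(\beta))$. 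For these auxiliary sums I would invoke Proposition \ref{valeurA} and Lemma \ref{valeurAsomme} in type $\TA$, and Proposition \ref{valeurD} in type $\TD$, exactly as the proof of Proposition \ref{valeurD} used type $\TA$.

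First I would treat $\TE_6$: peeling the long-branch leaf gives, on one branch, $T''$ a disjoint union of an $\TA_1$ (the short extra node, carrying a rescaled coefficient via Lemma \ref{saute_mouton}) and an $\TA_3$, and $T'$ of type $\TD_5$. After using Lemma \ref{saute_mouton} to normalise coefficients one computes
\begin{equation}
  N_{\TE_6}=q\,N_{\TA_1}(\gamma)\,N_{\TA_3}(\delta)+\sum_{\beta\in\Fq^*}N_{\TD_5}(\beta(\cdots))
\end{equation}
for suitable $\gamma,\delta$ depending on the normalised data; because $X_{\TE_6}(\balpha)\simeq X_{\TE_6}(1,\dots,1)$ the special-value contributions conspire to cancel the $\alpha$-dependence, and the sum over $\beta$ is evaluated by splitting off the single bad value $\beta$ where $N_{\TD_5}$ jumps. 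The same scheme handles $\TE_7$, now with $T''$ a union of $\TA_1$ and $\TA_4$ and $T'$ of type $\TD_6$ (even type $\TD$, hence the two-parameter family $N_{\TD_6}(\alpha,\beta)$, which is why a genuine dependence on $\alpha=\pm1$ survives and produces the two cases $N_{\TE_7}(\alpha)$ for $\alpha\neq-1$ versus $\alpha=-1$), and $\TE_8$, with $T''$ a union of $\TA_1$ and $\TA_5$ and $T'$ of type $\TD_7$ (odd type $\TD$, where again the special value washes out).

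The main obstacle is bookkeeping rather than conceptual: one must track, through each application of Lemma \ref{saute_mouton}, precisely which rescaled coefficient lands on which vertex, so as to know which among the finitely many ``special'' values of the type-$\TA$ and type-$\TD$ counting functions are actually hit inside the sum $\sum_{\beta\in\Fq^*}$, and with what multiplicity. Getting these finitely many exceptional terms right is what separates the clean generic polynomials from the corrected ones, and it is exactly where an error would creep in; in particular for $\TE_7$ one must verify that the coefficient configuration forces $\alpha=-1$ (and no other value) to be the singular case. Once the exceptional values are identified, each of the four identities reduces to a finite sum of geometric-series evaluations in $q$, and collecting terms yields the stated polynomials. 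A final sanity check is to confirm, for each of $\TE_6,\TE_7,\TE_8$, that $\sum_{\text{all leaf values}}$ of the count equals $q^{\,n+1}$ divided appropriately — mirroring Proposition \ref{Yopen_Yclosed} — which provides independent confirmation of the arithmetic.
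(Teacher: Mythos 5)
Your overall strategy (leaf removal via Proposition \ref{leaf_removal}, then feeding the pieces into the known type $\TA$ and $\TD$ counts) is the same as the paper's, but your combinatorial identifications of the pieces $T'$ and $T''$ are wrong, and this is not a bookkeeping detail: it invalidates the formulas you would compute. The disjoint union $T''$ splits into two components only when the removed leaf's neighbour $g$ is the trivalent vertex, i.e.\ only when you remove the leaf of the \emph{shortest} branch; that is exactly the choice the paper makes, getting $T''=\TA_2\sqcup\TA_2$, $\TA_2\sqcup\TA_3$, $\TA_2\sqcup\TA_4$ and $T'=\TA_5$, $\TA_6$, $\TA_7$ for $\TE_6$, $\TE_7$, $\TE_8$. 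With your choice of a long-branch leaf, $g$ is not the trivalent vertex and $T''$ stays connected: for $\TE_6$ one gets $T'=\TD_5$ (that part is right) but $T''=\TA_4$, not $\TA_1\sqcup\TA_3$; for $\TE_7$, if $T'=\TD_6$ then the removed leaf ends the length-two arm and $T''=\TA_5$, not $\TA_1\sqcup\TA_4$ (removing the length-three arm's leaf instead gives $T'=\TE_6$, $T''=\TD_5$); for $\TE_8$, if $T'=\TD_7$ then $T''=\TA_6$, not $\TA_1\sqcup\TA_5$. Consequently the product terms $q\,N_{\TA_1}N_{\TA_k}$ in your recursions are simply not the contributions of the $x_f=0$ stratum — e.g.\ for $\TE_6$ the correct term is $q\,N_{\TA_4}=q(q^4+q^2+1)$, whereas a generic $q\,N_{\TA_1}N_{\TA_3}=q(q-1)(q^3-1)$ — so the arithmetic cannot close up to the stated polynomials. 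Your explanation of why $\alpha=-1$ is the exceptional value for $\TE_7$ (attributed to the two-parameter even $\TD$ count) rests on the same faulty decomposition; in the paper it comes from the factor $N_{\TA_3}(-1/\alpha)$, which is special precisely when $-1/\alpha=1$.

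A corrected long-branch version of your plan is in fact viable (for $\TE_6$: $N_{\TE_6}=q\,N_{\TA_4}+\sum_{\beta\in\Fq^*}N_{\TD_5}(\beta)$ does give $q^6+q^4+q^3+q^2+1$, using Proposition \ref{valeurD}), but for $\TE_8$ the long-arm choice would require $\TE_7$ and $\TE_6$ data or the $\TD_7$ route with $T''=\TA_6$, so you must redo the identifications and the tracking of where the rescaled coefficient $\beta$ lands (it sits on a short leaf of the $\TD$ diagram, which is what triggers the exceptional value $\beta=1$). Finally, the proposed sanity check modelled on Proposition \ref{Yopen_Yclosed} is only established in type $\TA$ (it relies on $Z_{\TA_n}\simeq A^{n+1}$), so it is not available for $\TE$ as stated.
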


\begin{proof}
  The proof uses leaf-removal (Proposition \ref{leaf_removal}) and the
  knowledge of the numbers in type $\TA$.

  In the case of $\TE_6$, let us remove the leaf of the shortest
  branch. One gets, using Lemma \ref{saute_mouton} to compute the
  second term,
  \begin{equation}
    N_{\TE_6}=q N_{\TA_2} N_{\TA_2} + \sum_{\beta \in \Fq^*} N_{\TA_5}(\beta).
  \end{equation}
  One can compute this using Prop. \ref{valeurA} and Lemma \ref{valeurAsomme}. 
  
  In the case of $\TE_7$, let us remove the leaf of the shortest
  branch. One gets
  \begin{equation}
    N_{\TE_7}(\alpha) = q N_{\TA_2} N_{\TA_3}(-1/\alpha)+ (q-1) N_{\TA_6}.
  \end{equation}
  Therefore, by Prop. \ref{valeurA} applied to $\TA_3$, one has to
  separate the case $\alpha=-1$. One can compute the different results
  using Prop. \ref{valeurA}.
  
  In the case of $\TE_8$, let us remove the leaf of the shortest
  branch. One gets (using Lemma \ref{saute_mouton} to compute the
  second term)
  \begin{equation}
    N_{\TE_8}=q N_{\TA_2} N_{\TA_4} + \sum_{\beta \in \Fq^*} N_{\TA_7}(\beta).    \end{equation}
  One can compute this using Prop. \ref{valeurA} and Lemma \ref{valeurAsomme}.

\end{proof}

\bibliographystyle{alpha}
\bibliography{clusterfq}

\end{document}